\newcommand{\ourTitle}{A Divide and Conquer Approximation Algorithm for Partitioning Rectangles}
\newcommand{\ourAuthors}{Reyhaneh Mohammadi and Mehdi Behroozi}
\newcommand{\ourKeywords}{Space Partitioning Optimization; Computational Geometry; Plant Layout; VLSI Design; Treemap Visualization; Soft Rectangle Packing}
\newcommand{\shortitle}{Mohammadi and Behroozi: Approximate Partitioning of Rectangles}
\newcommand{\mailtohref}[1]{\href{mailto:mohammadi.re@northeastern.edu
;m.behroozi@northeastern.edu}{\color{black}#1}}
\newcommand{\ourAuthorsFormatted}{%
    \author{
        Reyhaneh Mohammadi\IEEEauthorrefmark{2} and
        Mehdi Behroozi\IEEEauthorrefmark{1}
	\IEEEcompsocitemizethanks{%
	\IEEEcompsocthanksitem \IEEEauthorrefmark{2}R. Mohammadi is with Northeastern University, Boston, MA, USA. \protect\\
Email: \mailtohref{mohammadi.re@northeastern.edu}
\IEEEcompsocthanksitem \IEEEauthorrefmark{1}Corresponding Author: M. Behroozi is with Northeastern University, Boston, MA, USA. \protect\\
Email: \mailtohref{m.behroozi@northeastern.edu} \protect\\
Phone: (617) 373-2032  \protect\\
Address: 449 Snell Engineering Center, 360 Huntington Ave, Boston, MA 02115  \protect\\
M. Behroozi gratefully acknowledges the support of Northeastern University for this research.
	}
    }
}
\begin{document}

\title{\ourTitle}
\ourAuthorsFormatted
\markboth{\shortitle}%
{fish}

\IEEEtitleabstractindextext{%
\begin{abstract}
    Given a rectangle $R$ with area $A$ and a set of areas $L=\{A_1,...,A_n\}$ with $\sum_{i=1}^n A_i = A$, we consider the problem of partitioning $R$ into $n$ sub-regions $R_1,...,R_n$ with areas $A_1,...,A_n$ in a way that the total perimeter of all sub-regions is minimized. The goal is to create square-like sub-regions, which are often more desired. We propose an efficient $1.203$--approximation algorithm for this problem based on a divide and conquer scheme that runs in $\mathcal{O}(n^2)$ time. For the special case when the aspect ratios of all rectangles are bounded from above by 3, the approximation factor is $2/\sqrt{3} \leq 1.1548$. We also present a modified version of out algorithm as a heuristic that achieves better average and best run times. 
\end{abstract}

\begin{IEEEkeywords}
\ourKeywords
\end{IEEEkeywords}}

\maketitle

\IEEEraisesectionheading{\section{Introduction}
\label{sec:introduction}}

\IEEEPARstart{P}{artitioning} of a rectangle into several sub-rectangles while optimizing some partition metric is a well-known geometric optimization problem with many different applications such as plant layout design \cite{anjos2006new,aiello2012multi,xiao2017problem,ji2017iterative,zawidzki2020multi}, geographic resource allocation \cite{aronov2006minimum,carlsson2012dividing,carlsson2013dividing,behroozi2016robust,carlsson2016geometric,behrooziCarlsson2020Springer}, treemapping in data visualization \cite{shneiderman1991tree,shneiderman1992tree,wattenberg1999visualizing,bruls2000squarified,shneiderman2001ordered,bederson2002ordered,engdahl2005ordered}, VLSI Design \cite{pan1996area,stockmeyer1983optimal,gonzalez1989improved,lopez1996efficient,anjos2012global}, and data assignment problem in parallel computers \cite{khanna1998approximating,beaumont2001matrix,lorys2000rectangle,ghosal2020rectangle}. 

This problem is also closely-related to many geometric and space partitioning optimization problems that include packing, covering, and tiling---generally focused on minimizing wasted space or optimally allocating geographical resources, as well as data visualization problems--focusing on finding solutions that are visually appealing. 
These problems include: cutting stock; knapsack; bin packing; guillotine; disk covering; polygon covering; kissing number; strip packing; rectangle packing; square packing; squaring the square; squaring the plane; and, in 3D space, cubing the cube; tetrahedron packing; and treemapping. Our problem is also called soft rectangle packing in the terminology of packing problems. These problems have a large body of literature and a long history that can be perhaps traced back to some geometric problems in the ancient era such as Queen Dido's problem in ancient Carthage \cite{ashbaugh2010problem}.

In this paper, we consider the following problem. Given a rectangle $R$ with $\Area(R)=A$ and a list of areas $A_1,...,A_n$ with $\sum_{i=1}^n A_i = A$ we want to partition $R$ into $n$ sub-rectangles with areas $A_1,...,A_n$ in a way that resulting rectangles are as square as possible. It is a common desire in various fields to have square-like rectangles. For this goal we try to minimize the sum of the perimeters of all sub-rectangles. Our choice of the total (average) perimeter is motivated by the fact that the perimeter of a rectangle is minimized when it is a square. We define the perimeter of $R$ as $\Perim(R) = \width(R) + \height(R)$. We also define the aspect ratio of a rectangle $R$ as
\[
\AR(R)=\max\left\{ \frac{\width(R)}{\height(R)}, \frac{\height(R)}{\width(R)}\right\},
\]
 i.e., $\AR\geq 1$ and the aspect ratio of a square is one.

The NP-hardness of the problem with different objective functions has been proved in several ways. In data visualization field this problem with the goal of minimizing the maximum aspect ratio of all sub-rectangles---was noted as NP-hard by Bruls \etal \cite{bruls2000squarified}.
de Berg \etal later proved the problem is strongly NP-hard with a reduction from the square packing problem \cite{deBerg2014Treemaps}.
The related problem of minimizing the total perimeter of all sub-rectangles was proved by Beaumont \etal \cite{beaumont2001matrix} to be NP-hard, using a reduction from the problem of partitioning a set of integers into two subsets of equal sum.
Given this computational complexity, we  settle with non-exact approaches and develop one approximation algorithm and one heuristic algorithm to find high quality partitions efficiently.

\section{Algorithm}
\label{sec:approxAlg}
In this section, we propose an approximation algorithm based on a divide and conquer scheme. 
Divide \& conquer approach has been previously used for this problem as in \cite{nagamochi2007approximation,fugenschuh2014exact,liang2015divide,behroozi2023treemap}, where the approximation guarantee is provided in the first two works.
The divide \& conquer approach presented by Nagamochi and Abe in \cite{nagamochi2007approximation}, suggests a factor 1.25 approximation algorithm.
F{\"u}genschuh \etal \cite{fugenschuh2014exact} modified this algorithm and achieved better result for some instances and worse on some others. In their analysis, they distinguish slow-decreasing and fast-decreasing sequences of areas. Slow-decreasing sequences refer to the case where the areas are of similar size. For such sequence of areas the approximation ratio of their algorithm is $2/\sqrt{3} \leq 1.1548$. For the faster-decreasing sequences they find an upper bound for the approximation ratio that depends on the decaying rate and is bounded above by 1.7657.
Our algorithm improves these results, which to the best of our knowledge are still the best among the existing algorithms.

\begin{algorithm}[t]
\SetAlgoLined
\BlankLine
    \SetKwFunction{main}{main}
    \SetKwFunction{proc}{Partition}
    \SetKwProg{Fmain}{Function}{}{}
    \SetKwProg{Fproc}{Procedure}{}{}
    
\KwIn{Rectangle $R$ 
and a list of $n$ areas $L=\{A_1,A_2,...,A_n\}$ with $\sum_{i=1}^n A_i = \Area (R)$.}
\KwOut{Partition $R_1,R_2,...,R_n$ with areas $A_1,A_2,...,A_n$.} 
\BlankLine
\tcc{***********************************************}
\Fmain{\main{R,L}}{
 \eIf{$n=1$}{ \Return{$R$}\;}{ 
   Let $w=\width(R)$ and $h=\height(R)$\;
   Sort $L$ in a non-increasing order and reindex the sorted areas as $A_1,A_2,...,A_n$\;
   \KwRet{${\sf Partition}(R,L,1,n)$}\;
  }
}
\setcounter{AlgoLine}{0}
\Fproc{\proc{$Q, \Lambda,\mbox{start, stop}$}}{
   Let $B_{(\cdot)} = \{A_i,...,A_j\}$ denote a block of areas with $\Area(B_{(\cdot)}) = \sum_{k=i}^j A_k$\;
  Set $L^{\prime} = \{B_{(1)},...,B_{(n)}\}$, where $B_{(i)} = \{A_i\},\; \forall i$\;
  Set $L^{\prime\prime} = \{a_1,a_2,...,a_n\}$, where $a_i=\Area(B_{(i)}),\; \forall i$\;
  \While {$\left\vert L^{\prime\prime} \right\vert > 2$}{
    Set $m=\left\vert L^{\prime\prime}\right\vert - 1$\;
    Set $a_m = a_{m}+a_{m+1}$\;
    Insert $a_m$ back into $L^{\prime\prime}$ at location $k$ with $1\leq k \leq m$ that keeps $L^{\prime\prime}$ sorted\;
    Set $B_{(m)} = \cup_{j=m}^{\left\vert L^{\prime\prime} \right\vert} B_{(j)}$ and $L^{\prime} = \{B_{(1)},...,B_{(k-1)},B_{(m)},B_{(k)},...,B_{(m-1)}\}$\;    
    \If{$k \neq m$}{
    Reindex the blocks in $L^{\prime}$ as $B_{(1)},B_{(2)},...,B_{(m)}$\;
    Reindex the areas in $L^{\prime\prime}$ as $a_1,a_2,...,a_m$\;
    }
}
Set $\Lambda_1 = L^{\prime}(1)=B_{(1)}$ and $\Lambda_2 = L^{\prime}(2)=B_{(2)}$\;
Let $w$ denote the width of $Q$ and $h$ be the height\;
\eIf{$w > h$}{
Divide $Q$ with a vertical line into two pieces $Q_1$ and $Q_2$ with area $L^{\prime\prime}(1)=a_1$ on the left and $L^{\prime\prime}(2)=a_2$ on the right\;}{
Divide $Q$ with a horizontal line into two pieces $Q_1$ and $Q_2$ with area $L^{\prime\prime}(1)=a_1$ on the top and $L^{\prime\prime}(2)=a_2$ on the bottom\;}
\KwRet{$\proc(Q_{1},\Lambda_{1}) \cup \proc(Q_{2},\Lambda_{2})$}\;
}
\protect\caption{\label{alg:approxAlg} Algorithm ${\sf ApproximationDC}(R,L)$; it takes as input a rectangle and a list of  areas. It generates a treemap of sub-rectangles with the given areas according to a divide and conquer approach.}
\end{algorithm}

In our \modelApproximateDC algorithm, we first sort the areas in a non-ascending order and then recursively merge the \emph{two smallest areas}, while retaining the list of areas sorted, to finally end up with two compounded areas. Then, we partition $R$ into two segments, horizontally or vertically, with these two compounded areas and apply the algorithm to each of these segments. The pseudocode for \modelApproximateDC, for the case where cuts are either vertical or horizontal (depending on the width \& height of the remaining segment), is shown in \cref{alg:approxAlg}.
It should be noted that 
 it can be generalized to polygonal and angular cuts and has no restriction on input layout container shape. However, our approximation factor analysis here is restricted to the case where the input shape is a rectangle and the cuts are rectangular.
This algorithm achieves computational time of $\mathcal{O}(n^2)$. 

\begin{algorithm}[tbp]
    \SetAlgoLined
    \BlankLine
    \SetKwFunction{main}{main}
    \SetKwFunction{proc}{Partition}
    \SetKwProg{Fmain}{Function}{}{}
    \SetKwProg{Fproc}{Procedure}{}{}
    
    \KwIn{Rectangle $R$ 
    and a list of $n$ areas $L=\{A_1,A_2,...,A_n\}$ with $\sum_{i=1}^n A_i = \Area (R)$.}
    \KwOut{Partition $R_1,R_2,...,R_n$ with areas $A_1,A_2,...,A_n$.} 
    \BlankLine
    \tcc{***********************************************}
\Fmain{\main{R,L}}{
 \eIf{$n=1$}{ \Return{$R$}\;}{ 
   Let $w=\width(R)$ and $h=\height(R)$\;
   Sort $L$ in a non-increasing order and reindex the sorted areas as $A_1,A_2,...,A_n$\;
   \KwRet{${\sf Partition}(R,L,1,n)$}\;
  }
}
\setcounter{AlgoLine}{0}
\Fproc{\proc{$Q, \Lambda,\mbox{start, stop}$}}{
    Let $B_{(\cdot)} = \{A_i,...,A_j\}$ denote a block of areas with $\Area(B_{(\cdot)}) = \sum_{k=i}^j A_k$\;
    Set $L^{\prime} = \{B_{(1)},...,B_{(n)}\}$, where $B_{(i)} = \{A_i\},\; \forall i$\;
    Set $L^{\prime\prime} = \{a_1,a_2,...,a_n\}$, where $a_i=\Area(B_{(i)}),\; \forall i$\;
    \While {$\left\vert L^{\prime\prime} \right\vert > 2$}{
        Set threshold $\tau = \sum_{i=1}^{\left\vert L^{\prime\prime} \right\vert} a_i \,/ \left\vert L^{\prime\prime} \right\vert$\;
        Find the smallest $i$ such that $a_i < \tau$\;
        \eIf{
            $i<\left\vert L^{\prime\prime} \right\vert$}{$m=i$\;
        }{    
            Set $m = \lceil \left\vert L^{\prime\prime} \right\vert/2 \rceil$\;
        }  
        Set $a_m = \sum_{j=m}^{\left\vert L^{\prime\prime} \right\vert} a_j$ and $L^{\prime\prime} = \{a_1,...,a_{m-1}\}$\;
        Insert $a_m$ back into $L^{\prime\prime}$ at location $k$ with $1\leq k \leq m$ that keeps $L^{\prime\prime}$ sorted\;
        Set $B_{(m)} = \cup_{j=m}^{\left\vert L^{\prime\prime} \right\vert} B_{(j)}$ and $L^{\prime} = \{B_{(1)},...,B_{(k-1)},B_{(m)},B_{(k)},...,B_{(m-1)}\}$\;    
        \If{$k \mathrel{\mathtt{!=}} m$}{
            Reindex the areas in $L^{\prime\prime}$ as $a_1,a_2,...,a_m$\;
            Reindex the blocks in $L^{\prime}$ as $B_{(1)},B_{(2)},...,B_{(m)}$\;
        }
    }
    Set $\Lambda_1 = L^{\prime}(1)=B_{(1)}$ and $\Lambda_2 = L^{\prime}(2)=B_{(2)}$\;
    Let $w$ denote the width of $Q$ and $h$ be the height\;
    \eIf{$w > h$}{
        Divide $Q$ with a vertical line into two pieces $Q_1$ and $Q_2$ with area $L^{\prime\prime}(1)$ on the left and $L^{\prime\prime}(2)$ on the right\;
    }{
        Divide $Q$ with a horizontal line into two pieces $Q_1$ and $Q_2$ with area $L^{\prime\prime}(1)$ on the top and $L^{\prime\prime}(2)$ on the bottom\;
    }
    \Return{$\proc(Q_{1},\Lambda_{1}) \cup \proc(Q_{2},\Lambda_{2})$}\;
  }  
    \caption{
        \modelModifiedDC$(R,L)$
        Generates a rectangular treemap with the given areas and bounding rectangle according to a divide and conquer approach.
    }
    \label{alg:ModifiedDivideConquer} 
\end{algorithm}

\subsection{Improving the Best and Average Running Times}
Bundling the two smallest areas, could be modified to reduce $\mathcal{O}(n)$ such operations in the list. This may change the quality of the solution, since we would no longer have the approximation guarantee. However, having a faster alternative of the algorithm could often be useful.
In our \modelModifiedDC algorithm we first sort the areas in a non-ascending order and then recursively merge \emph{all areas below some threshold} to finally end up with two compounded areas. Here, we set the threshold to be the average of considered areas in each iteration.
If at any point we have more than two areas and none of them is below the threshold, we divide the list by half and then sum them to finally end up with two total subareas.
Then, similar to \cref{alg:approxAlg}, we partition $R$ into two segments with these two compounded areas and apply the algorithm to each of these two segments. The worst case of these two algorithms is the same but the best and average performance in \cref{alg:ModifiedDivideConquer} improves. The magnitude of this improvement highly depends on the input list that determines the behavior aroud the set threshold $\tau$ in each iteration and the number of times the input list gets divided by 2 in each recursive call.

The pseudocode for \modelModifiedDC, for the case where cuts are either vertical or horizontal (depending on the width \& height of the remaining segment), is shown in \cref{alg:ModifiedDivideConquer}.
As shown in the divide \& conquer of \cite{liang2015divide}, it can be easily modified and generalized to handle polygonal and angular cuts and to have no restriction on input layout container shape. This is also true for \modelApproximateDC but in this paper we only analyze the approximation guarantee for the case when we have rectangular input region and output sub-regions.

 \begin{figure}[tbp]%
    \centering%
    \begin{subfigure}[b]{.75\textwidth}%
        \centering
        \includegraphics[sqrtofarea=.99\textwidth]{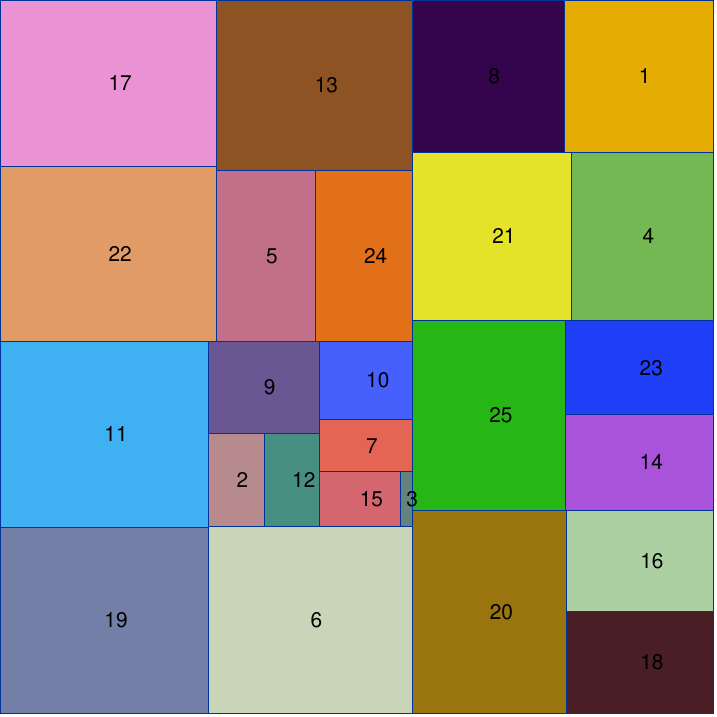}%
        \caption*{ }%
        \label{fig:apx-test-25}%
    \end{subfigure}%
    \\
    \vspace{-16pt}
     \begin{subfigure}[b]{.75\textwidth}%
        \centering
        \includegraphics[sqrtofarea=.99\textwidth]{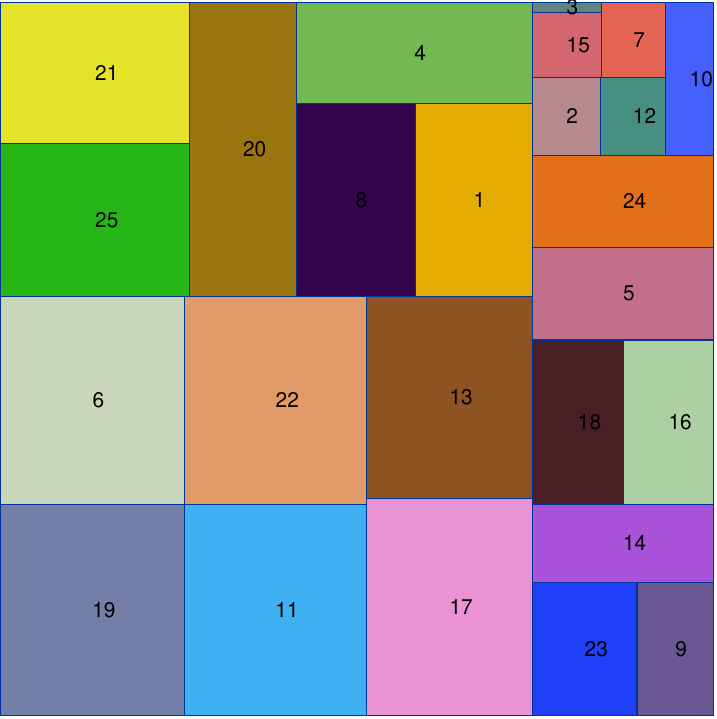}%
        \caption*{ }%
        \label{fig:apx-test-25-MDC}%
    \end{subfigure}%
    \\
    \vspace{-10pt}
    \subfigsCaption{\protect
        Illustration of the output on a random instance with 25 areas by \cref{alg:approxAlg} at the top with total perimeter of 19.0773 
        and \cref{alg:ModifiedDivideConquer} 
        on the bottom with total perimeter of 19.5197. Sub-rectangle follow the same labeling and color coding.
    }%
    \label{fig:APX-MDC}%
\end{figure}

\subsection{Analysis of Approximate Divide and Conquer}
\label{sec:AnalysisApxDC}

Our approach in the analysis of \modelApproximateDC is in the same spirit of \cite{nagamochi2007approximation}, although our algorithm is totally different. We begin with some definitions we use in proving some critical characteristics of our algorithm and then we introduce the lower and upper bounds.

\subsubsection{Critical Characteristics}
\label{sec:ApxCharacteristics}

Let $\mathcal{R}=\{R_1,...,R_n\}$ be a partition of $R$ output by \cref{alg:approxAlg}. Rectangles $R_i$ in this output are called \emph{simple }rectangles. Any intermediary input for some recursive call of \cref{alg:approxAlg} is called a \emph{compound} rectangle. Every call of the algorithm dissects an input region $R$ into two rectangles $R_1$ and $R_2$. For simplicity, we call them the \emph{left child} and the \emph{right child}, respectively, regardless of their actual positioning.

  \begin{lem}
  Algorithm \ref{alg:approxAlg} recursively partitions a rectangle $R$ containing rectangles $R^{\prime}_i,R^{\prime}_{i+1},...,R^{\prime}_k$ with areas $A_i \geq A_{i+1} \geq \cdots \geq A_k$ into two rectangles $R_1$ and $R_2$ such that $\Area(R_1)\geq \frac{1}{3}\Area(R)$ and if $A_i\leq \frac{2}{3}\Area(R)$, then we also have $\Area(R_2)\geq \frac{1}{3}\Area(R)$.    
\end{lem}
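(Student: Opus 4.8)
The plan is to trace through the merging loop in \cref{alg:approxAlg} and show that the two compound areas $a_1 = \Area(B_{(1)})$ and $a_2 = \Area(B_{(2)})$ that survive at the end each satisfy the stated bounds. The key structural fact to exploit is that, at every iteration, the algorithm merges the \emph{two smallest} areas in the current sorted list and reinserts their sum while keeping the list sorted; consequently, throughout the process, $a_1$ is always the largest current block. I would first handle the final step of the loop: right before termination we have exactly three areas $a_1 \ge a_2 \ge a_3$ in $L''$, and the algorithm merges $a_2$ and $a_3$. After this merge the left child gets area $a_1$ (the old largest) and the right child gets $a_2 + a_3$. Since $a_1 \ge a_2 \ge a_3$, we have $a_1 \ge \tfrac13(a_1+a_2+a_3) = \tfrac13\Area(R)$, giving the first claim. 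For the second claim, note $a_2 + a_3 \ge 2a_3$ and also $a_2 + a_3 \ge a_1 - $ (something we must control): the cleaner route is to observe that, because $a_1$ is the result of possibly several earlier merges starting from $A_i$, and because of the sorted-merge structure, $a_1 \le \max\{A_i, \text{(sum of two smaller blocks)}\}$, so $a_1$ never exceeds $\tfrac23\Area(R)$ unless the single original area $A_i$ already does. This is exactly the hypothesis $A_i \le \tfrac23\Area(R)$: under it, $a_1 \le \tfrac23\Area(R)$, hence $a_2 + a_3 = \Area(R) - a_1 \ge \tfrac13\Area(R)$.

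The crux is therefore the inductive claim: \emph{if $A_i \le \tfrac23\Area(R)$ then $a_1 \le \tfrac23\Area(R)$ throughout the loop}. I would prove this by induction on the number of merge steps, with the invariant that every block $B_{(\ell)}$ in $L''$ satisfies $\Area(B_{(\ell)}) \le \tfrac23\Area(R)$, or more precisely that the \emph{largest} block has area $\le \tfrac23\Area(R)$. Initially the largest block is $\{A_i\}$, with area $A_i \le \tfrac23\Area(R)$ by hypothesis. For the inductive step, a merge combines the two smallest blocks $a_{m}$ and $a_{m+1}$ in the current list of size $t = |L''|$; their sum satisfies $a_m + a_{m+1} \le \tfrac{2}{t}\Area(R)$ since each of the two smallest is at most the average $\tfrac1t\Area(R)$. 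As long as $t \ge 3$, this gives $a_m + a_{m+1} \le \tfrac23\Area(R)$, so the newly created block also respects the bound; and no other block changed. Hence the invariant is maintained until the list has size $2$, and in particular $a_1 \le \tfrac23\Area(R)$ at termination, which is what we needed. (When $t = 2$ the loop has already stopped, so we never merge into something exceeding $\tfrac23$.)

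The main obstacle I anticipate is being careful that $a_1$, the surviving largest block, is indeed \emph{not} one of the two blocks merged at any step — i.e., that the ``two smallest'' are always distinct from the current maximum when $t \ge 3$. This is immediate when $t \ge 3$ (the maximum is not among the two minima unless all three top values are equal, in which case it still doesn't matter since the sum of the two smallest is then $\le \tfrac23\Area(R)$ anyway by the averaging bound). I would also need to double-check the degenerate small cases $k - i + 1 \in \{1, 2, 3\}$ separately: for $n=1$ there is nothing to prove; for $n=2$ the two areas are the children directly and $A_i \ge A_{i+1}$ forces $A_i \ge \tfrac12\Area(R) \ge \tfrac13\Area(R)$ and, under $A_i \le \tfrac23\Area(R)$, also $A_{i+1} \ge \tfrac13\Area(R)$; for $n=3$ it is exactly the final-step analysis above. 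Everything else is routine bookkeeping with the sorted list.
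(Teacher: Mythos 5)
Your proof is correct, but it takes a genuinely different route from the paper's. The paper splits on whether $A_i \geq \frac{1}{2}\Area(R)$ and, in the harder case, inspects the last three surviving blocks $a_1 \geq a_2 \geq a_3$, arguing structurally that $a_1 \leq a_2 + a_3$ (either because $a_1$ is an original area below $\frac{1}{2}\Area(R)$, or because it is itself a merge of two blocks that were each no larger than the blocks that became $a_2$ and $a_3$). You instead maintain a global invariant --- under the hypothesis $A_i \leq \frac{2}{3}\Area(R)$, every block ever created has area at most $\frac{2}{3}\Area(R)$ --- proved by the averaging bound that the two smallest of $t \geq 3$ blocks sum to at most $\frac{2}{t}\Area(R)$. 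This is cleaner and more self-contained: it sidesteps the paper's somewhat delicate claim about the provenance of $a_1$, and it immediately yields that \emph{both} final blocks lie in $[\frac{1}{3}\Area(R), \frac{2}{3}\Area(R)]$, from which both parts of the lemma follow. Two small points to tighten. First, your justification ``each of the two smallest is at most the average'' is not literally true (the second smallest can exceed the average, e.g.\ $0.4, 0.35, 0.25$); the correct statement is that the \emph{average of the two smallest} is at most the overall average, which still gives $a_m + a_{m+1} \leq \frac{2}{t}\Area(R)$, so the invariant survives. Second, after the final merge the left child is not necessarily the old largest block $a_1$: the merged block $a_2+a_3$ is reinserted in sorted position and becomes the left child whenever $a_2+a_3 \geq a_1$. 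This does not harm your conclusions, since the left child is in any case $\max\{a_1,\, a_2+a_3\} \geq \frac{1}{2}\Area(R)$, and under the hypothesis both final blocks are at least $\frac{1}{3}\Area(R)$; but the phrasing should be corrected.
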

\begin{proof}
If $A_i\geq \frac{1}{2}\Area(R)$, its order in the list never changes and the other sub-areas get summed up until only two areas remain in the list, where $\Area(R_1)=A_i$ will be the first one. As a result, in this case, if $\frac{1}{2}\Area(R) \leq \Area(R_1) \leq \frac{2}{3}\Area(R)$, we will have $\Area(R_2)\geq \frac{1}{3}\Area(R)$, otherwise, $\Area(R_2)\leq \frac{1}{3}\Area(R)$.\\
Next we consider the case $\frac{1}{2}\Area(R) > A_i \geq A_{i+1} \geq \cdots \geq A_k$. Right before we end up with the final two children $R_1$ and $R_2$, we have three sub-areas. Let them be $a_1, a_2$ and $a_3$. If $a_1=A_i \leq \frac{1}{2}\Area(R)$, it should be also greater than $\frac{1}{3}\Area(R)$, otherwise it could not be located at the beginning of the list. If $a_1$ is not the area of an original rectangle of the input list, it should be summation of two other sub-areas that have areas less than $a_2$ and $a_3$. So, we have $a_1 \geq a_2 \geq a_3$ and $a_1 \leq a_2+a_3 \leq 2a_1$. Hence, $\Area(R_1)=a_2+a_3$ and $\Area(R_2)=a_1$. 
As a result, we have $\sfrac{a_1}{(a_1+a_2+a_3)} \geq \sfrac{1}{3}$ and thus $\Area(R_1) \geq a_1 \geq \frac{1}{3}\Area(R)$.
\end{proof}
\begin{lem}
\label{lem:ApxChildrenAspectRatios}
Let $R_1$ and $R_2$ be the left and right children of a compound rectangle R, and let $A_{\ell}\geq A_{\ell+1}\geq ...\geq A_{m}$ be the sub-areas of $A(R)$. Then,
\begin{enumerate}[label=(\roman*)]
\item $\AR(R_1)\leq \max\{\AR(R), 3\}$\,,
\item $\AR(R_2)\leq \max\{\AR(R), 3, (1 +\frac{A_{\ell}}{A_{\ell+1}})\}$.
\end{enumerate}
Moreover, if $\AR(R_2) > 3$ and $\AR(R)\leq \AR(R_2)$, then $R_1$ is a simple rectangle with $\Area(R_1)=A_{\ell}>\frac{2}{3}\Area(R)$ and $\AR(R_2)\leq 1+\frac{A_{\ell}}{A_{\ell+1}}$.
\end{lem}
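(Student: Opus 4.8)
The plan is to analyze the single splitting step that produces $R_1$ and $R_2$ from the compound rectangle $R$. Recall that the algorithm merges the two smallest areas repeatedly until only two compound areas $a_1 \ge a_2$ remain, then cuts $R$ perpendicular to its longer side so that the larger piece $R_1$ (area $a_1$) sits against the shorter side. Assume WLOG that $\width(R) \ge \height(R)$, so the cut is vertical: $R_1$ and $R_2$ both have height $h = \height(R)$ and widths $w_1 = a_1/h$, $w_2 = a_2/h$ with $w_1 + w_2 = w = \width(R)$. The starting point is the previous lemma, which gives $a_1 \ge \tfrac13 \Area(R)$, hence $w_1 \ge \tfrac13 w$ and $w_2 \le \tfrac23 w$.

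For part (i), I would bound $\AR(R_1)$. Since $w_1 \ge \tfrac13 w \ge \tfrac13 h$ (the last inequality because $w \ge h$), and $w_1 \le w$, we get $\tfrac13 \le w_1/h \le w/h = \AR(R)$; when $w_1/h \ge 1$ this gives $\AR(R_1) = w_1/h \le \AR(R)$, and when $w_1/h < 1$ we get $\AR(R_1) = h/w_1 \le 3$. Either way $\AR(R_1) \le \max\{\AR(R),3\}$. For part (ii), the analogous computation for $R_2$ uses $w_2 \le w$ and the lower bound on $w_2$. If $a_1 \le \tfrac23\Area(R)$, the previous lemma gives $a_2 \ge \tfrac13\Area(R)$ and the same argument yields $\AR(R_2) \le \max\{\AR(R),3\}$. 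The remaining case is $a_1 > \tfrac23\Area(R)$: then $a_1$ must be an original (simple) area $A_\ell$ — a merged block would be at most the sum of all other areas, i.e. at most $\tfrac12\Area(R) \le \tfrac23\Area(R)$ is not quite it, so I should argue instead that a compound $a_1$ is a sum that began with the two \emph{smallest} areas, forcing $a_2$ (the other survivor) to also be large, contradicting $a_1 > \tfrac23\Area(R)$; hence $a_1 = A_\ell$ and $a_2 = A_{\ell+1} + \cdots + A_m = \Area(R) - A_\ell$. Then $w_2/h = a_2/(h^2 \cdot \AR(R)^{-1})$... more cleanly, $\AR(R_2) = \max\{w_2/h, h/w_2\}$, and since $w_2 < \tfrac13 w \le \tfrac13 h \cdot \AR(R)$ could still be large or small; the bound $\AR(R_2) \le 1 + A_\ell/A_{\ell+1}$ comes from $w_1/w_2 = a_1/a_2 = A_\ell/(\Area(R)-A_\ell)$ together with $\AR(R_2) = h/w_2$ (as $w_2$ is small) $= (h/w)(w/w_2) = \AR(R)^{-1}(1 + w_1/w_2) \le 1 + A_\ell/a_2 \le 1 + A_\ell/A_{\ell+1}$, using $\AR(R) \ge 1$ and $a_2 \ge A_{\ell+1}$.

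For the ``moreover'' clause, suppose $\AR(R_2) > 3$ and $\AR(R) \le \AR(R_2)$. By part (ii) this forces the bound-achieving branch $\AR(R_2) \le 1 + A_\ell/A_{\ell+1}$ (the alternatives $\AR(R)$ and $3$ are excluded by hypothesis), and by the case analysis above this branch only occurs when $a_1 = A_\ell > \tfrac23\Area(R)$ and $a_1$ survives as a single simple area, so $R_1$ is simple with $\Area(R_1) = A_\ell$. I expect the main obstacle to be the bookkeeping that establishes $a_1$ must be a single original area rather than a merged block when $a_1 > \tfrac23\Area(R)$: one has to track carefully that under the ``merge two smallest'' rule, if $A_\ell \ge \tfrac12\Area(R)$ it never moves and everything else is absorbed into $a_2$, while if $A_\ell < \tfrac12\Area(R)$ one reuses the three-area endgame argument from the previous lemma's proof to show neither survivor can exceed $\tfrac23\Area(R)$ — so the case $a_1 > \tfrac23$ pins down $a_1 = A_\ell$ with $A_\ell \ge \tfrac12\Area(R)$. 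The geometric inequalities themselves are then routine.
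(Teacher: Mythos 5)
Your proposal is essentially correct, but it is worth noting that it does not parallel the paper's proof, because the paper gives none: the printed proof of this lemma is the single sentence ``The proof is the same as the proof of Lemma~3.1 in \cite{nagamochi2007approximation}.'' Since the paper elsewhere stresses that its merging rule is ``totally different'' from Nagamochi--Abe's, your direct argument from the algorithm's actual behavior is arguably the more defensible route, and the geometric core of it (both children inherit the height $h$ of $R$; a width between $w/3$ and $w$ forces $\AR \le \max\{\AR(R),3\}$; otherwise $\AR(R_2)=\frac{h}{w_2}=\frac{h}{w}\bigl(1+\frac{a_1}{a_2}\bigr)\le 1+\frac{A_\ell}{A_{\ell+1}}$) is exactly what one wants. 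The one step you flag as an obstacle --- that $a_1>\frac{2}{3}\Area(R)$ forces $a_1$ to be a single original area --- deserves a cleaner argument than the ``three-area endgame,'' because the large survivor $b_1$ in that endgame could itself be a previously merged block. The clean fix: whenever the algorithm merges the two smallest areas $c\ge d$, the while-loop condition guarantees at least one further area $e\ge c\ge\frac{c+d}{2}$ is still present, so the newly formed block satisfies $c+d\le\frac{2}{3}(c+d+e)\le\frac{2}{3}\Area(R)$; applying this to the last merge that produced $a_1$ shows \emph{every} compound block is at most $\frac{2}{3}\Area(R)$, so $a_1>\frac{2}{3}\Area(R)$ pins down $a_1=A_\ell$ (which, being larger than the sum of everything else, is never among the two smallest and hence never merged). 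Two small points of hygiene for the ``moreover'' clause: you should dispose of the sub-case $w_2\ge h$ (there $\AR(R_2)=w_2/h<\AR(R)$, contradicting $\AR(R)\le\AR(R_2)$), and you should observe that the degenerate possibility $\AR(R_2)=\AR(R)>3$ with $a_2\ge\frac{1}{3}\Area(R)$ cannot occur, since then $w_2/h<\AR(R)$ strictly and $h/w_2\le 3$; with those two remarks your case analysis closes completely.
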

\begin{proof}
The proof is the same as the proof of Lemma 3.1 in \cite{nagamochi2007approximation}.
\end{proof}
\begin{cor}
Given a rectangle $R$ and a list $L=\{A_{\ell},A_{\ell},...,A_m\}$ with $\sum_{A_i \in L} A_i = \Area (R)$ and $A_{\ell}\geq A_{\ell+1}\geq ...\geq A_{m}$, let $\mathcal{R}=\{R_{\ell},R_{\ell+1},...,R_m\}$ be the output of Algorithm ${\sf ApproximationDC}(R,L)$. Let $\mathcal{R^{\prime}}$ be the set of all rectangles that are the input of some recursive call of ${\sf ApproximationDC}(R,L)$. Then, for each rectangle $\hat{R} \in \mathcal{R} \cup \mathcal{R^{\prime}}$ we have
\begin{equation}
\label{eq:ApxARbound}
\AR(\hat{R}) \leq \max\{\AR(R),3,(1+ \max_{A_i\in L} \frac{A_i}{A_{i+1}})\}
\end{equation}
\end{cor}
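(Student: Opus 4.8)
The plan is an induction on the depth of $\hat R$ in the recursion tree of ${\sf ApproximationDC}(R,L)$, with the inductive hypothesis being exactly the claimed inequality. Write $M:=\max\{\AR(R),3,1+\max_{A_i\in L}\frac{A_i}{A_{i+1}}\}$ for its right-hand side, and note $M\ge 3$. The base case is the root $\hat R=R$, where $\AR(R)\le M$ holds by definition of $M$. For the inductive step, let $\hat R$ be the left or right child of a compound rectangle $\hat R'$ for which $\AR(\hat R')\le M$ is already known. If $\hat R$ is the left child, Lemma~\ref{lem:ApxChildrenAspectRatios}(i) yields $\AR(\hat R)\le\max\{\AR(\hat R'),3\}\le M$. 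If $\hat R$ is the right child, Lemma~\ref{lem:ApxChildrenAspectRatios}(ii) yields $\AR(\hat R)\le\max\{\AR(\hat R'),3,1+\frac{A_\ell}{A_{\ell+1}}\}$, where $A_\ell\ge A_{\ell+1}$ denote the two largest sub-areas of $\hat R'$; so to finish the induction it is enough to know that $\frac{A_\ell}{A_{\ell+1}}\le\max_{A_i\in L}\frac{A_i}{A_{i+1}}$.

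The real content is therefore the structural fact that the area list carried by every node of the recursion tree is a contiguous block $\{A_p,A_{p+1},\dots,A_q\}$ of the globally sorted list $L$. Granting it, the two largest sub-areas of any compound node $\hat R'$ are $A_p$ and $A_{p+1}$, so $\frac{A_\ell}{A_{\ell+1}}=\frac{A_p}{A_{p+1}}$ is literally one of the consecutive ratios in $\max_{A_i\in L}\frac{A_i}{A_{i+1}}$ and the induction above closes. I would prove this fact by induction on the recursion together with an inner induction on the while-loop of {\sf Partition}: the loop starts from the singleton blocks $B_{(i)}=\{A_i\}$ (contiguous), each iteration replaces the two smallest-area blocks by their union, and the claim is that this union is always again an interval, so that the blocks always partition $[p,q]$ into contiguous sub-intervals; on exit the two surviving blocks are $\{A_p,\dots,A_r\}$ and $\{A_{r+1},\dots,A_q\}$ for some $r$, i.e.\ exactly the contiguous lists handed to the two children.

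The step I expect to be the main obstacle is the heart of that inner induction: that the two smallest-area blocks always occupy adjacent positions in the range order, so that merging them preserves contiguity. The tempting counterexample---a large (merged) block wedged between two small blocks that happen to be the two global minima---turns out to force near-ties of the block areas, so it only threatens under the degenerate configurations where the reinsertion step of the algorithm is itself ambiguous; a careful proof would either fix a tie-breaking convention in the reinsertion and argue the pattern never arises, using that any block has area at least its largest element and any merged block has area at least twice its smallest element together with the global sorting of the $A_i$, or else bypass it by reusing the case analysis in the proofs of the first lemma and Lemma~\ref{lem:ApxChildrenAspectRatios} (if $A_p\ge\frac{1}{2}\Area(\hat R')$ the heavy area never moves and $B_{(1)}=\{A_p\}$, $B_{(2)}=\{A_{p+1},\dots,A_q\}$; otherwise one reaches three blocks $a_1\ge a_2\ge a_3$ with $a_1\le a_2+a_3$ and merging $a_2$ with $a_3$ again leaves two contiguous blocks). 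Once contiguity is in hand, the corollary is immediate from the first paragraph.
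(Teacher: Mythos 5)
Your first paragraph reproduces the paper's own (one-line) argument: apply Lemma~\ref{lem:ApxChildrenAspectRatios} at every node of the recursion tree and induct. You are also right that, as stated, this induction has a hole: to use part~(ii) one must know that the two largest sub-areas $A_\ell,A_{\ell+1}$ of each compound node are \emph{consecutive} in the globally sorted list, so that $A_\ell/A_{\ell+1}$ is one of the ratios over which $\max_{A_i\in L}A_i/A_{i+1}$ ranges. The problem is that the structural fact you propose to supply this---that every node's area list is a contiguous block of the sorted $L$---is false, and the ``tempting counterexample'' you dismiss actually occurs, with no ties and no ambiguity in the reinsertion step. Take $L=(100,\,5,\,4.1,\,3.9,\,3.6,\,3.4)$. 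The top-level while-loop merges $3.6+3.4=7$, then $4.1+3.9=8$, then $7+5=12$, then $12+8=20$, producing the children $\{A_1\}$ and $\{A_2,\dots,A_6\}$. In the recursive call on the latter, after the merges $3.6+3.4=7$ and $4.1+3.9=8$ the area-sorted block list is $\{A_3,A_4\}$ (area $8$), $\{A_5,A_6\}$ (area $7$), $\{A_2\}$ (area $5$); the two smallest blocks are $\{A_5,A_6\}$ and $\{A_2\}$, which are \emph{not} adjacent in index order because the larger block $\{A_3,A_4\}$ sits between them, and the resulting child carries the non-contiguous area list $\{A_2,A_5,A_6\}$. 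Your fallback argument breaks at exactly the same point: the three final blocks $a_1\ge a_2\ge a_3$ do satisfy $a_1\le a_2+a_3$, but merging $a_2$ with $a_3$ does not leave two contiguous blocks.

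Once contiguity fails, the inequality you need, $A_\ell/A_{\ell+1}\le\max_{A_i\in L}A_i/A_{i+1}$, no longer follows from sorting: the ratio of the two largest elements of a non-contiguous block is a product of several consecutive ratios and can strictly exceed the largest single one (e.g.\ in a geometrically decreasing list all consecutive ratios equal $2$ while $A_p/A_{p+2}=4$). So the obstacle you flagged as ``the main obstacle'' is genuine, and neither of your two proposed repairs closes it; in your write-up the corollary remains unproved. To be fair, the paper's own proof (``follows by induction'') silently assumes the same contiguity---it is baked into the notation $A_\ell\ge\dots\ge A_m$ of Lemma~\ref{lem:ApxChildrenAspectRatios}---so what you have really done is expose a step that the paper does not justify either; a complete argument would need a different invariant controlling the two largest areas of each node, or a reformulated right-hand side in \eqref{eq:ApxARbound}.
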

\begin{proof}
By Lemma \ref{lem:ApxChildrenAspectRatios} we have $\max\{\AR(R_1),\AR(R_2)\} \leq \max\{\AR(R), 3, (1 +\frac{A_{\ell}}{A_{\ell+1}})\}$ for $A_{\ell}=\max_{A_i\in L} A_i$. Then, \eqref{eq:ApxARbound} follows by induction.
\end{proof}

\subsubsection{Lower Bound}
\label{sec:ApxLB}
In order to find lower bounds, we first adopt the definition of forced rectangles from \cite{nagamochi2007approximation}. 
\begin{Def}
\label{def:forcedRect}
The original rectangle $R$ is a forced rectangle. The right child $R_2$ of $R$ is a forced rectangle if $A_1 \geq \Area(R)/2$. Any rectangle whose long edges are both contained in the long edges of a forced rectangle is defined to be a forced rectangle. 
\end{Def}
Note that the parent of any forced rectangle is also a forced rectangle. 
\begin{lem}
\label{lem:ApxLB}
Given a rectangle $R$ and a list $L=\{A_{1},A_{2},...,A_n\}$ with $\sum_{A_i \in L} A_i = \Area (R)$, let $\mathcal{R}=\{R_{1},...,R_n\}$ be a partition of $R$. For any $R_i \in \mathcal{R}$ let $\LB_i$ denote a lower bound on $\Perim(R_i)$. Then,
\begin{enumerate}[label=(\roman*)]
\item for any forced simple rectangle $R_i \in \mathcal{R}$, $\LB_i=\width(R_i) + \height(R_i)$ is the tight lower bound on $\Perim(R_i)$.
\item for any non-forced simple rectangles $R_j \in \mathcal{R}$, we have $\Perim(R_j) \geq \LB_j=2\sqrt{A_j}$. 
\end{enumerate}
\end{lem}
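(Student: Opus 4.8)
The plan is to dispatch the two parts by separate arguments. Part~(ii) is the elementary isoperimetric fact for rectangles: for any rectangle $R_j$ with $\width(R_j)\,\height(R_j)=A_j$, the arithmetic--geometric mean inequality gives $\Perim(R_j)=\width(R_j)+\height(R_j)\ge 2\sqrt{\width(R_j)\,\height(R_j)}=2\sqrt{A_j}$, with equality precisely when $R_j$ is the square of side $\sqrt{A_j}$. I would note that the same estimate holds for the rectangle of area $A_j$ in any feasible partition --- an optimal one in particular --- so $\LB_j=2\sqrt{A_j}$ is a valid lower bound, and, being attained by the square, it is the best bound that depends on $A_j$ alone; this is exactly what part~(ii) asserts.

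For part~(i), I would first observe that, since the paper defines $\Perim(R_i)=\width(R_i)+\height(R_i)$, the quantity $\LB_i=\width(R_i)+\height(R_i)$ is literally the perimeter of $R_i$: it is trivially a lower bound and is attained, hence tight. What I would actually want to establish --- and what makes this the right quantity to carry forward into the lower bound on the optimum that follows --- is that for a \emph{forced} simple rectangle one is entitled to keep this exact value rather than relax it to the (possibly much smaller) $2\sqrt{A_i}$ of part~(ii). I would do this by reading off the rigidity implicit in Definition~\ref{def:forcedRect}: following the chain in that definition, every forced rectangle has one side equal to a length $\ell$ inherited from the forced ancestor at the head of its chain. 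The heads are $R$ itself (its side lengths are given data) and the right children $R_2$ created when $A_1\ge\Area(R)/2$; for the latter, the argument already used in the proof of the first lemma of this subsection shows that $R_1$ is the single rectangle of area $A_1$, that the cut is orthogonal to the long side of $R$, and hence that $R_2$ spans $R$'s full short side. The ``long-edges-contained-in-long-edges'' clause then carries $\ell$ unchanged down the chain once one checks, via Lemma~\ref{lem:ApxChildrenAspectRatios}, that $\ell$ is the long side. It follows that a forced simple rectangle $R_i$ has sides $\ell$ and $A_i/\ell$, so $\width(R_i)+\height(R_i)=\ell+A_i/\ell$ is its exact perimeter and nothing larger can be a lower bound for it.

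Part~(ii) is then immediate, and I expect the only real work to be the bookkeeping in part~(i): forced rectangles form not one chain but a family of chains, each headed by $R$ or by a thin leftover beside a dominant area (as in the ``moreover'' clause of Lemma~\ref{lem:ApxChildrenAspectRatios}), and the delicate point is confirming that within each chain the inherited side is the long side --- in particular pinning down the orientation of $R_2$ when $A_1\ge\Area(R)/2$, which a priori could be ``tall'' or ``wide.'' This is where the first lemma (the $\tfrac13$-area bounds) and Lemma~\ref{lem:ApxChildrenAspectRatios} (the aspect-ratio bounds) come in, and the boundary case $A_1=\tfrac12\Area(R)$ should be handled on its own.
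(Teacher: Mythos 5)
Your part~(ii) is exactly the paper's argument (AM--GM; the paper dismisses it as trivial), and your opening observation on part~(i) --- that $\LB_i=\width(R_i)+\height(R_i)$ is literally $\Perim(R_i)$ and hence a tight lower bound on itself --- does dispose of the lemma as literally worded. But you correctly identify that the substantive content of (i) is that this value remains a valid lower bound on the half-perimeter of the area-$A_i$ piece in an \emph{arbitrary} partition (the optimal one in particular), since that is how $\sum_i \LB_i$ is used against $\mathrm{OPT}$ later --- and it is exactly there that your argument stops. Everything you establish (the inherited length $\ell$, the chain of forced ancestors, the conclusion that $R_i$ has sides $\ell$ and $A_i/\ell$) describes what the \emph{algorithm} outputs; it says nothing about what a competing partition can do with the area $A_i$. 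The paper's one-line proof is precisely that missing comparison: in any other partitioning scheme, the rectangle of area $A_i$ has short edge at most $\min\{\width(R_i),\height(R_i)\}$, and since $s\mapsto s+A_i/s$ is decreasing on $(0,\sqrt{A_i}\,]$, its half-perimeter is at least $\min\{\width(R_i),\height(R_i)\}+A_i/\min\{\width(R_i),\height(R_i)\}=\Perim(R_i)$. Your closing ``nothing larger can be a lower bound for it'' gives only tightness, not validity; without the containment bound on the short edge and the monotonicity step, the claim you actually need is not proved.

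Two further points. First, you have the inheritance backwards: if the two long edges of a child lie in the two long edges of a forced parent, what is preserved down the chain is the distance between them, i.e.\ the \emph{short} dimension, not the long one; so $\ell$ is the short side of the chain's head and $A_i/\ell\ge\ell$ is the long side of $R_i$. This is not cosmetic, because the comparison above works only because it is the \emph{short} edge that containment in $R$ controls ($\le\min\{\width(R),\height(R)\}$). Second, the genuinely delicate sub-case is the one your sketch defers to ``bookkeeping'': for a chain headed by a right child $R_2$ with $A_1\ge\Area(R)/2$, the preserved short side is $\min\{\width(R_2),\height(R_2)\}$, which can be strictly smaller than $R$'s short side, so containment in $R$ alone no longer justifies the bound; one additionally needs that \emph{any} partition must place a rectangle of area $A_1\ge\Area(R)/2$ spanning nearly the full length of $R$, squeezing the remaining pieces into a comparably thin strip. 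The paper's proof is silent on this too, but your argument as written does not reach the point where the issue arises.
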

\begin{proof}
The lower bound in \emph{(i)} follows from the fact that the length of a short edge of $R_i$ cannot be larger than $\min\{\height(R_i),\width(R_i)\}$ in any other partitioning scheme of $R$. The lower bound in \emph{(ii)} is trivial.
\end{proof}

\subsubsection{Upper Bound}
\label{sec:ApxUB}
The upper bound in our algorithm highly depends on the aspect ratio of the generated sub-rectangles in the partition. Therefore, we break the analysis into different cases depending on aspect ratios.

~\\
\textbf{Case I $(\bm{\AR(R_i) \leq 3, \; i=1,...,n)}$:} If all sub-rectangles created by our algorithm either have aspect ratios less than 3 or are simple forced rectangles we will have the approximation factor less than $2/\sqrt{3} < 1.1548$. This can be simply shown by the fact that for simple forced rectangles the ratio is 1 and for the other case we have
\[
\resizebox{\columnwidth}{!}{%
$\frac{\sum_{1 \leq i\leq n}\Perim(R_i)}{\sum_{1 \leq i\leq n} \LB_i} \leq \smash{\displaystyle\max_{1 \leq i \leq n}}\{\frac{\Perim(R_i)}{\LB_i}\} \leq \smash{\displaystyle\max_{1 \leq i \leq n}} \{\frac{\sqrt{3A_i}+\sqrt{A_i/3}}{2\sqrt{A_i}}\} = \frac{2}{\sqrt{3}}\,,$
}
\]
where the first inequality comes from the fact that all summands in both numerator and denominator are positive reals.

~\\
\textbf{Case II $(\bm{\AR(R_i) > 3, \; \textrm{ for some } i)}$:}
Suppose at some point in our algorithm we come up with a rectangle with aspect ratio greater than 3. We will find the \emph{worst case scenarios} in terms of approximation factor. For simplicity of notation in the analysis, here we use $\rho$ to denote the aspect ratio.
\begin{lem}
\label{lem:ApxARGe3}
If the algorithm divides $R$ into two rectangles $R_1$ and $R_2$ and $\Area(R_2)=z, \; \width(R_2)= w_z,\; \height(R_2)=h_z$ (with $w_z\geq h_z$), and $\AR(R_2)=\rho_z\geq 3$, which makes the total approximation factor to come above $2/\sqrt{3}$, the approximation factor will be maximized when $R_2$ is further divided into $R_{z1}$ and $R_{z2}$, where $R_{z1}$ is a simple rectangle and $R_{z2}$ has $w_{R_{z2}} \leq h_{R_{z2}}$.
\begin{proof}
We first show, by contradiction, that $R_{z1}$ cannot be a combined rectangle consisting of two sub-rectangles having width greater than height and aspect ratios greater than 3. Assume that $R_{z1}$ is a combined rectangle consisting of two sub-rectangles $R_{z^{\prime}}$ and $R_{z^{\prime\prime}}$ with areas $z^{\prime}$ and $z^{\prime\prime}$ and aspect ratios of $\rho^{\prime}$ and $\rho^{\prime\prime}$. Then we need to show that 
\vspace{-2ex}
\begingroup
\setlength{\jot}{2ex} 
\begin{multline}
\label{eq:ApxARge3}
\frac{a+\sqrt{z^{\prime}}(\sqrt{\rho^{\prime}}+1/\sqrt{\rho^{\prime}})+\sqrt{z^{\prime\prime}}(\sqrt{\rho^{\prime\prime}}+1/\sqrt{\rho^{\prime\prime}})}{b+\sqrt{z^{\prime}}+\sqrt{z^{\prime\prime}}} \\ 
\leq \frac{a+\sqrt{z^\prime+z^{\prime\prime}}(\sqrt{\rho^{\prime}+\rho^{\prime\prime}}+1/\sqrt{\rho^{\prime}+\rho^{\prime\prime}})}{b+\sqrt{z^\prime+z^{\prime\prime}}}
\end{multline}
\endgroup 
or equivalently, 
\vspace{-2ex}
\begingroup
\setlength{\jot}{2ex} 
\begin{multline*}
\resizebox{0.95\columnwidth}{!}{%
$a\sqrt{z^\prime+z^{\prime\prime}}+b\sqrt{z^\prime \rho^\prime}+\sqrt{z^\prime \rho^\prime}\sqrt{z^\prime+z^{\prime\prime}} +b\sqrt{\frac{z^\prime}{\rho^\prime}}+\sqrt{\frac{z^\prime}{\rho^\prime}}\sqrt{z^\prime+z^{\prime\prime}}$
} \\
\resizebox{0.92\columnwidth}{!}{%
$+b\sqrt{z^{\prime\prime}\rho^{\prime\prime}}+\sqrt{z^{\prime\prime} \rho^{\prime\prime}}\sqrt{z^\prime+z^{\prime\prime}} 
+b\sqrt{\frac{z^{\prime\prime}}{\rho^{\prime\prime}}}+\sqrt{\frac{z^{\prime\prime}}{\rho^{\prime\prime}}}\sqrt{z^\prime+z^{\prime\prime}} $
} \\
\resizebox{0.95\columnwidth}{!}{%
$ \leq a\sqrt{z^\prime}+a\sqrt{z^{\prime\prime}}+b\sqrt{z^\prime+z^{\prime\prime}}\sqrt{\rho^\prime+\rho^{\prime\prime}}+\sqrt{z^\prime}\sqrt{z^\prime+z^{\prime\prime}}\sqrt{\rho^\prime+\rho^{\prime\prime}} $
} \\
\resizebox{0.98\columnwidth}{!}{%
$+\sqrt{z^{\prime\prime}}\sqrt{z^\prime+z^{\prime\prime}}\sqrt{\rho^\prime+\rho^{\prime\prime}}+b\frac{\sqrt{z^\prime+z^{\prime\prime}}}{\sqrt{\rho^\prime+\rho^{\prime\prime}}}+\sqrt{z^\prime}\frac{\sqrt{z^\prime+z^{\prime\prime}}}{\sqrt{\rho^\prime+\rho^{\prime\prime}}}
+\sqrt{z^{\prime\prime}}\frac{\sqrt{z^\prime+z^{\prime\prime}}}{\sqrt{\rho^\prime+\rho^{\prime\prime}}}\,,$
}
\end{multline*}
\endgroup
where $a$ is the sum of the perimeter of other sub-rectangles inside our main rectangle and $b$ is their corresponding lower bound values.
We will show the following inequalities:
\[
a\sqrt{z^\prime+z^{\prime\prime}}+b\sqrt{\frac{z^{\prime\prime}}{\rho^{\prime\prime}}} \leq a\sqrt{z^\prime}+a\sqrt{z^{\prime\prime}}\,,
\]
\[
b\sqrt{z^\prime\rho^\prime}+b\sqrt{z^{\prime\prime} \rho^{\prime\prime}} \leq b\sqrt{z^\prime+z^{\prime\prime}}\sqrt{\rho^\prime+\rho^{\prime\prime}}\,,
\]
\[
\sqrt{z^\prime+z^{\prime\prime}}\sqrt{z^\prime\rho^\prime}+\sqrt{z^\prime+z^{\prime\prime}}\sqrt{\frac{z^\prime}{\rho^\prime}}\leq \sqrt{z^\prime+z^{\prime\prime}}\sqrt{z^\prime}\sqrt{\rho^\prime+\rho^{\prime\prime}}\,,
\]
\[
\sqrt{z^{\prime\prime}\rho^{\prime\prime}}+\sqrt{\frac{z^{\prime\prime}}{\rho^{\prime\prime}}}\leq \sqrt{z^{\prime\prime}}\sqrt{\rho^\prime+\rho^{\prime\prime}}\,,
\]
and
\[
b\sqrt{\frac{z^\prime}{\rho^\prime}}=b\sqrt{\frac{z^\prime+z^{\prime\prime}}{\rho^\prime+\rho^{\prime\prime}}}
\]
By summing the two sides of these inequalities and by the fact that $\sqrt{z^\prime}\frac{\sqrt{z^\prime+z^{\prime\prime}}}{\sqrt{\rho^\prime+\rho^{\prime\prime}}}+\sqrt{z^{\prime\prime}}\frac{\sqrt{z^\prime+z^{\prime\prime}}}{\sqrt{\rho^\prime+\rho^{\prime\prime}}}$ is positive we will prove \eqref{eq:ApxARge3}.

~\\
First, for $\rho^\prime,\rho^{\prime\prime} \geq 3$ we have
\[
1 \leq \sqrt{\rho^{\prime}}+\sqrt{\rho^{\prime\prime}}-\sqrt{\rho^\prime+\rho^{\prime\prime}} 
\]
Since $a\geq b$ we multiply left hand side by $b$ and right hand side by $a$. We get
\vspace{-1ex}
\begingroup
\setlength{\jot}{1ex} 
\begin{multline*}
b \leq a\sqrt{\rho^{\prime}}+a\sqrt{\rho^{\prime\prime}}-a\sqrt{\rho^\prime+\rho^{\prime\prime}} \; \Rightarrow \\
bh_z \leq ah_z\sqrt{\rho^{\prime}}+ah_z\sqrt{\rho^{\prime\prime}}-ah_z\sqrt{\rho^\prime+\rho^{\prime\prime}} \; \Rightarrow \\
ah_z\sqrt{\rho^\prime+\rho^{\prime\prime}}+bh_z\leq ah_z\sqrt{\rho^{\prime}}+ah_z\sqrt{\rho^{\prime\prime}} \; \Rightarrow \\
a\sqrt{z^\prime+z^{\prime\prime}}+b\sqrt{\frac{z^{\prime\prime}}{\rho^{\prime\prime}}} \leq a\sqrt{z^\prime}+a\sqrt{z^{\prime\prime}}
\end{multline*}
\endgroup

~\\
Second, we need to show that $\sqrt{z^\prime\rho^\prime}+\sqrt{z^{\prime\prime} \rho^{\prime\prime}} \leq \sqrt{z^\prime+z^{\prime\prime}}\sqrt{\rho^\prime+\rho^{\prime\prime}}$. After raising both sides to the power of two, this can be simplified to $2\sqrt{z^\prime z^{\prime\prime}\rho^\prime\rho^{\prime\prime}}\leq z^\prime\rho^{\prime\prime}+z^{\prime\prime}\rho^\prime$, which holds because $(\sqrt{z^\prime\rho^{\prime\prime}}-\sqrt{z^{\prime\prime}\rho^\prime})^2\geq 0$.

~\\
Third and Fourth, we need to show $\sqrt{\rho^\prime+\rho^{\prime\prime}} \geq \sqrt{\rho^\prime}+\sqrt{\frac{1}{\rho^\prime}}$ and $\sqrt{\rho^\prime+\rho^{\prime\prime}} \geq \sqrt{\rho^{\prime\prime}}+\sqrt{\frac{1}{\rho^{\prime\prime}}}$. Both hold for $\rho^\prime, \rho^{\prime\prime} \geq 3$.

~\\
Fifth, for the last one we need to show that $\rho^\prime z^{\prime\prime}=\rho^{\prime\prime}z^\prime$. Replacing $\rho^\prime$ with $w_{z^\prime}/h_{z^\prime}$, $z^\prime$ with $w_{z^\prime}h_{z^\prime}$, and doing the same for $\rho^{\prime\prime}$ and $z^{\prime\prime}$ the equality holds since $h_{z^\prime}=h_{z^{\prime\prime}}$.
This completes the proof of Lemma \ref{lem:ApxARGe3} for $\rho^\prime, \rho^{\prime\prime}\geq 3$.

Now, suppose that $\rho^{\prime\prime} \leq 3$. We know that 
\[
\frac{a+\sqrt{z^\prime}(\sqrt{\rho^\prime}+\frac{1}{\sqrt{\rho^\prime}})}{b+\sqrt{z^\prime}} \leq \frac{a+\sqrt{z^\prime+z^{\prime\prime}}(\sqrt{\rho^\prime+\rho^{\prime\prime}}+\frac{1}{\sqrt{\rho^\prime+\rho^{\prime\prime}}})}{b+\sqrt{z^\prime+z^{\prime\prime}}}\,,
\]
for $\rho^\prime \geq 3$ and $\rho^{\prime\prime} \leq 3$ and $b\leq a \leq (\sqrt{\rho^\prime}+1/\sqrt{\rho^\prime})b$, 
based on the fact that the aspect ratio of $R_{z^{\prime}}$ is less than or equal to the aspect ratio of $R_{z^{\prime}} \cup R_{z^{\prime\prime}}$.
Now, we want to show that inequality \eqref{eq:ApxARge3} holds for this case too. 
\vspace{-2ex}
\begingroup
\setlength{\jot}{2ex} 
\begin{multline*}
\frac{a+\sqrt{z^{\prime}}(\sqrt{\rho^{\prime}}+1/\sqrt{\rho^{\prime}})+\sqrt{z^{\prime\prime}}(\sqrt{\rho^{\prime\prime}}+1/\sqrt{\rho^{\prime\prime}})}{b+\sqrt{z^{\prime}}+\sqrt{z^{\prime\prime}}} \\
\leq \max\{\frac{a+\sqrt{z^\prime}(\sqrt{\rho^\prime}+\frac{1}{\sqrt{\rho^\prime}})}{b+\sqrt{z^\prime}}, 2/\sqrt{3}\} \\
 \leq \frac{a+\sqrt{z^\prime+z^{\prime\prime}}(\sqrt{\rho^\prime+\rho^{\prime\prime}}+\frac{1}{\sqrt{\rho^\prime+\rho^{\prime\prime}}})}{b+\sqrt{z^\prime+z^{\prime\prime}}}
\end{multline*}
\endgroup
Hence, $R_{z1}$ is a simple rectangle and we cannot have more than 1 simple rectangle having width greater than height. The only situation that by dividing $R_2$ with area $z$ into other rectangles we increase the approximation factor is when $R_{z2}$ has aspect ratio greater than $\rho_z$ with width smaller than height. Otherwise, it contributes positively to the approximation factor and decreases it.
\end{proof}
\end{lem}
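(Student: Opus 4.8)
The plan is to strip the statement down to a single algebraic inequality about how the approximation factor changes when a wide rectangle is re‑cut, and then verify that inequality by an elementary term‑by‑term argument. First I would separate what is fixed from what is variable: once the recursion tree is fixed down to the node $R_2$, the rectangles lying outside the subtree rooted at $R_2$ contribute a fixed total perimeter $a$ and a fixed total lower bound $b$, and since every rectangle has perimeter at least its lower bound (Lemma~\ref{lem:ApxLB}), $a\geq b$. The approximation factor of the whole partition is then $(a+P)/(b+\Lambda)$, where $P$ and $\Lambda$ are the perimeter sum and lower‑bound sum of the simple rectangles inside $R_2$, so the task reduces to: among all ways the algorithm can subdivide the wide rectangle $R_2$ ($\rho_z\geq 3$, hence $w_z>h_z$, hence a vertical cut), which one maximizes this ratio? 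The geometric fact I would use repeatedly is that two siblings produced along the ``wide chain'' of descendants of $R_2$ (those whose width exceeds their height) share a common height, so for such a pair with areas $z',z''$ and aspect ratios $\rho',\rho''$ one has $\rho'z''=\rho''z'$, and therefore $z'/\rho'=(z'+z'')/(\rho'+\rho'')$.

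The heart of the argument is a ``merging dominates'' claim: if the wide child $R_{z1}$ of $R_2$ is itself compound with two children $R_{z'},R_{z''}$ that are both wide with aspect ratios at least $3$, then replacing $R_{z1}$ together with these two simple children by a single simple rectangle of area $z'+z''$ does not decrease the approximation factor. Writing the perimeter of a rectangle of area $\zeta$ and aspect ratio $\rho$ as $\sqrt{\zeta}(\sqrt{\rho}+1/\sqrt{\rho})$ and its (non‑forced) lower bound as $2\sqrt{\zeta}$, this is precisely inequality~\eqref{eq:ApxARge3}. I would prove it by clearing the positive denominators and decomposing the resulting polynomial inequality into a handful of elementary pieces, each following from one of: (i) for $\rho',\rho''\geq 3$, $\sqrt{\rho'}+\sqrt{\rho''}-\sqrt{\rho'+\rho''}\geq 1$, combined with $a\geq b$ and multiplication by the shared height $h_z$; (ii) the superadditivity $\sqrt{z'\rho'}+\sqrt{z''\rho''}\leq\sqrt{(z'+z'')(\rho'+\rho'')}$, which after squaring is just $(\sqrt{z'\rho''}-\sqrt{z''\rho'})^2\geq 0$; (iii) $\sqrt{\rho'+\rho''}\geq\sqrt{\rho'}+1/\sqrt{\rho'}$ and $\sqrt{\rho'+\rho''}\geq\sqrt{\rho''}+1/\sqrt{\rho''}$, again for $\rho',\rho''\geq 3$; and (iv) the shared‑height identity $z'/\rho'=(z'+z'')/(\rho'+\rho'')$. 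Summing these and discarding a manifestly positive leftover term yields~\eqref{eq:ApxARge3}.

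Next I would dispose of the mixed case in which one child, say $R_{z'}$, is wide with $\rho'\geq 3$ while the other satisfies $\rho''\leq 3$. Here I would use the monotonicity of $t\mapsto(a+ct)/(b+t)$ together with the observation that the aspect ratio of $R_{z'}\cup R_{z''}$ is at least that of $R_{z'}$ (adjoining $R_{z''}$ along the shared height only widens it), so the one‑child ratio $(a+\sqrt{z'}(\sqrt{\rho'}+1/\sqrt{\rho'}))/(b+\sqrt{z'})$ is itself bounded above by the ratio obtained by regarding $R_{z'}\cup R_{z''}$ as one simple rectangle; sandwiching the compound ratio between this quantity and the Case~I bound $2/\sqrt{3}$ via a $\max$ again gives~\eqref{eq:ApxARge3}. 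The upshot is that a worst‑case subdivision of $R_2$ contains at most one simple wide rectangle, which must be $R_{z1}$, and that cutting $R_2$ can raise the approximation factor only when the complementary child $R_{z2}$ acquires aspect ratio larger than $\rho_z$ with its height exceeding its width — every other configuration of $R_2$'s children strictly lowers the factor — which is exactly the asserted structure.

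The step I expect to be the main obstacle is not any single inequality but the bookkeeping that makes ``merging'' a provably harmless move on the global ratio: one must track which rectangles are forced and which are not so that $2\sqrt{\zeta}$ is genuinely the right lower bound for the merged rectangle, one must confirm that the side condition $b\leq a\leq(\sqrt{\rho'}+1/\sqrt{\rho'})b$ invoked in the mixed case actually holds in the regime where the factor already exceeds $2/\sqrt{3}$, and one must check that the chosen decomposition of the cleared‑denominator inequality is both valid and exhaustive (the terms really do pair up and sum back to~\eqref{eq:ApxARge3}). The geometric inputs — equal heights of siblings cut from a wide rectangle, and monotonicity of the aspect ratio under widening — are what make all of these pieces fit together.
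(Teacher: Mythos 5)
Your proposal follows essentially the same route as the paper's own proof: the same reduction to the ratio $(a+P)/(b+\Lambda)$ with $a\geq b$, the same ``merging dominates'' inequality~\eqref{eq:ApxARge3} verified by clearing denominators and splitting into the identical elementary pieces (the $\sqrt{\rho'}+\sqrt{\rho''}-\sqrt{\rho'+\rho''}\geq 1$ bound, the superadditivity of $\sqrt{z\rho}$, the bounds $\sqrt{\rho'+\rho''}\geq\sqrt{\rho}+1/\sqrt{\rho}$, and the shared-height identity $z'/\rho'=(z'+z'')/(\rho'+\rho'')$), and the same $\max\{\cdot,2/\sqrt{3}\}$ sandwich for the mixed case $\rho''\leq 3$. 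The plan is sound and matches the paper's argument step for step, including its identification of the bookkeeping around forced rectangles and the side condition $b\leq a\leq(\sqrt{\rho'}+1/\sqrt{\rho'})b$ as the delicate points.
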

\begin{cor}
If the algorithm divides $R$ into two rectangles $R_1$ and $R_2$ and $\Area(R_2)=z, \; \width(R_2)= w_z,\; \height(R_2)=h_z$ (with $w_z\geq h_z$), and $\AR(R_2)=\rho_z\geq 3$, which makes the total approximation factor to come above $2/\sqrt{3}$, the maximum approximation factor will be less than 
\[
\frac{a+w_z+1.5z/w_z}{b+2\sqrt{z}}\,,
\]
where $a$ is the sum of the perimeter of other sub-rectangles inside our main rectangle and $b$ is their corresponding lower bound values.
\begin{proof}
In Lemma \ref{lem:ApxARGe3} we showed that in the worst case we do not have more than one rectangle with same direction in a rectangle with aspect ratio greater than 3. So, in the worst case we have a simple rectangle and a rectangle in another direction in each division. We know that in each division to make the perimeter maximized, we should consider the lowest aspect ratio. For example, when we have aspect ratio of 3 the height of the simple rectangle that is created is greater than when the aspect ratio is 4. So, the total perimeter of this sub-rectangles from when we have our first non-forced rectangle is:
$$w_z+\frac{\frac{z}{9}}{w_z}+\frac{\frac{z}{81}}{\frac{z}{w_z}}+ \cdots \leq w_z+ \frac{1.5z}{w_z}$$
Furthermore,
the approximation factor would be less than:
$$\frac{a+w_z+1.5z/w_z}{b+2\sqrt{z_1}+2\sqrt{z_2}+...+2\sqrt{z_k}}\leq \frac{a+w_z+1.5z/w_z}{b+2\sqrt{z}}$$ 
\end{proof}
\end{cor}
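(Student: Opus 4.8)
The plan is to pin down, via Lemma~\ref{lem:ApxARGe3}, the worst-case way in which $R_2$ gets recursively dissected, to bound the perimeter it generates by a convergent geometric series, and to bound the matching sum of lower bounds from below by subadditivity of the square root.

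First I would invoke Lemma~\ref{lem:ApxARGe3}. Since $\AR(R_2)=\rho_z\geq 3$ and we are in the regime that pushes the ratio above $2/\sqrt{3}$, the lemma says the worst case is the one in which every dissection of the current thin rectangle produces exactly one simple rectangle (with its long edge along the long edge of its parent) together with a remainder whose short edge is inherited from the parent and whose aspect ratio is at least that of the parent; this remainder is then dissected the same way. Consequently the sub-rectangles tiling $R_2$ form a chain of simple rectangles of areas $z_1,\dots,z_k$ with $\sum_{j=1}^k z_j=z$, of alternating orientation, whose dimensions shrink geometrically.

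Next, for the numerator, I would argue that at each peel the contribution to the total perimeter is largest when the newly created simple rectangle has aspect ratio exactly $3$: this is the smallest aspect ratio consistent with the remainder still being ``bad'', it leaves the largest short side (hence the largest perimeter), and it keeps the rectangle's area, hence its lower bound, only as large as forced. Feeding these extremal dimensions back into the recursion, the first rectangle contributes at most $w_z$, the next a length of order $(z/9)/w_z$, the next a length of order $(z/81)/(z/w_z)$, and so on, because each step scales the relevant area down by the factor $1/9$ (a ratio $\rho_z/9$ per level, which is $\le 1/3$ near the worst case). Summing this geometric series gives
\[
w_z+\frac{z/9}{w_z}+\frac{z/81}{z/w_z}+\cdots\;\leq\;w_z+\frac{1.5\,z}{w_z}\,,
\]
so the total perimeter of all the sub-rectangles is at most $a+w_z+1.5z/w_z$. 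For the denominator, each simple rectangle $R_j$ in the chain has lower bound at least $2\sqrt{z_j}$ (Lemma~\ref{lem:ApxLB}), and since $\sqrt{\cdot}$ is subadditive, $\sum_{j=1}^{k}2\sqrt{z_j}\geq 2\sqrt{\sum_{j=1}^k z_j}=2\sqrt{z}$; thus the total lower bound is at least $b+2\sqrt{z}$. Since the numerator bound is positive, replacing the denominator $b+2\sqrt{z_1}+\cdots+2\sqrt{z_k}$ by the smaller quantity $b+2\sqrt{z}$ only enlarges the ratio, which yields the claimed bound.

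The main obstacle is the numerator step: one must argue rigorously that aspect ratio $3$ is simultaneously the extremal choice at every level of the recursion, and then check that the nested dissection with alternating orientations really does produce a geometric series whose sum is at most $w_z+1.5z/w_z$ (which is where the precise decay factor and the admissible range of $\rho_z$ enter). The denominator estimate, by contrast, is just subadditivity of the square root and is immediate.
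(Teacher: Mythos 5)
Your proposal follows essentially the same route as the paper: invoke Lemma~\ref{lem:ApxARGe3} to reduce to a chain of one simple rectangle per division with alternating orientation, take aspect ratio $3$ as the extremal choice at each level to get the geometric series $w_z+\frac{z/9}{w_z}+\frac{z/81}{z/w_z}+\cdots\leq w_z+1.5z/w_z$, and use $\sum_j 2\sqrt{z_j}\geq 2\sqrt{z}$ (superadditivity of the sum of square roots) to shrink the denominator. The ``main obstacle'' you flag --- rigorously justifying that aspect ratio $3$ is extremal at every level of the recursion --- is left at the same informal level in the paper's own proof, so your account matches it in both substance and rigor.
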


\subsubsection{Analysis of the Approximation Factor}
\label{sec:ApxCases}
In this section, we analyze various cases that there is a compound rectangle with aspect ratio greater than 3 and combine the rectangle upper bound and lower bound with other simple sub-rectangles in order to prove the bound of our approximation algorithm.

~\\
~\\
\textbf{Case 1.} Consider a rectangle $R$ with aspect ratio $\rho \leq 3$ that is dissected into two rectangles $R_y$ which is a simple rectangle and $R_t$ which is a compound rectangle having aspect ratio less than 3 consisting a simple rectangle as its left  child ($R_x$) and $R_z$ as its right child that can be either a compound or a simple rectangle and $R_z$ is located on top of $R_x$. For simplicity, suppose that area of $R$ is 1 and its width is greater than its height (see Figure \ref{fig:1_apprx}).
\begin{figure}[!ht]
    \centering
    \includegraphics[scale=0.6]{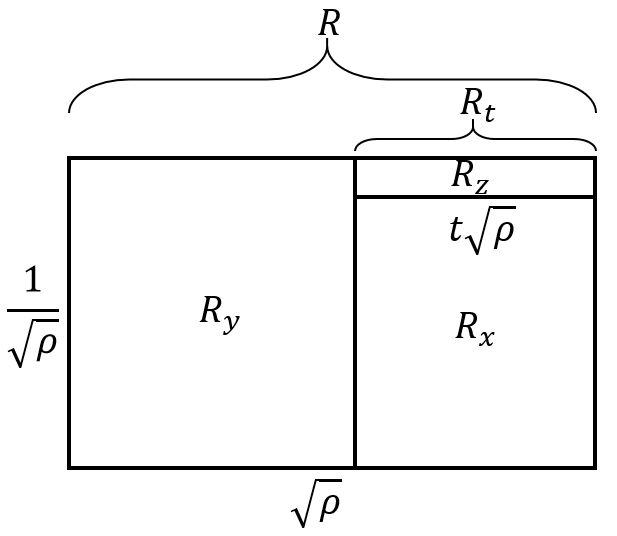}
    \caption{Rectangle instance of Case 1 in the proof of approximation factor.}
    \label{fig:1_apprx}
\end{figure}

\textbf{Case 1.1}
First, we show that if $z \geq 0.01$, the approximation factor is less than 1.19. We need to show that 
$$\max \, \frac{\sqrt{\rho}+t\sqrt{\rho}+2/\sqrt{\rho}+1.5z/\sqrt{\rho}\,t}{2(\sqrt{x}+\sqrt{z}+\sqrt{1-t})} \leq 1.19 $$
s.t.
$$1 \leq \rho \leq 3$$
$$x+z = t$$
$$y+t=1$$
$$\rho>\frac{z}{t^2}$$
$$\rho \geq \frac{1}{3t}$$
$$\rho \leq \frac{1}{t}$$
$$1-t \geq x(=t-z)$$
Where inequalities state the range of rectangle $R, R_t$ and $R_z$ aspect ratios. 
We also have $z \geq 0.01$.

Proof of the bound:\\
Replacing $x$ with $t-z$ we need to show that:
\[
\resizebox{\columnwidth}{!}{%
$\min \, (2.38\sqrt{t-z}+2.38\sqrt{1-t}+2.38\sqrt{z}-\sqrt{\rho}-t\sqrt{\rho}-\frac{2}{\sqrt{\rho}}-\frac{1.5z}{\sqrt{\rho}\,t} )\geq 0$
}
\]
The second derivative of the expression over $z$ is:
\[
\frac{-0.595}{\sqrt{z^3}}-\frac{0.595}{\sqrt{(t-z)^3}} < 0 
\]
Therefore, minimum of expression is either at $z=\max\{0.01,2t-1\}$ or at $z= \frac{\rho t^2}{3}$.\\

$\bm{z=0.01 \; (2t-1 \leq 0.01):}$ We need to show that:
\[
\resizebox{\columnwidth}{!}{%
$2.38\sqrt{t-0.01}+2.38\sqrt{1-t}+0.238-\sqrt{\rho}-t\sqrt{\rho}-\frac{2}{\sqrt{\rho}}-\frac{0.015}{\sqrt{\rho}t} \geq 0$
}
\]
Second derivative over $t$ is always negative. Then, $t=\frac{1}{3\rho}$ or $t=\min\{\frac{1}{\rho}, 0.505\}$.
\begin{itemize}
    \item $t=0.505 :$ We want to show that:
    \[
     \resizebox{0.94\columnwidth}{!}{%
    $4.76\sqrt{0.495}+0.238-\sqrt{\rho}-0.505\sqrt{\rho}-\frac{2}{\sqrt{\rho}}-\frac{0.015}{0.505\sqrt{\rho}} \geq 0$
    }
    \]
    The left hand side is always positive for $1 \leq \rho \leq 1/0.505$.
    \item $t=\frac{1}{\rho} :$ We want to show that:
     \[
     \resizebox{0.94\columnwidth}{!}{%
     $2.38\sqrt{\frac{1}{\rho}-0.01}+2.38\sqrt{1-\frac{1}{\rho}}+0.238-\frac{3}{\sqrt{\rho}}-1.015\sqrt{\rho} \geq 0$
     }
     \]
    The left hand side is always positive for $1/0.505 \leq \rho \leq 3$.
    \item $t=\frac{1}{3\rho}:$ We want to show that:
\[
\resizebox{0.94\columnwidth}{!}{%
$2.38\sqrt{\frac{1}{3\rho}-0.01}+2.38\sqrt{1-\frac{1}{3\rho}}+0.238-\frac{7}{3\sqrt{\rho}}-1.045\sqrt{\rho} \geq 0$\,,
}
\]
which holds for all $1 \leq  \rho \leq 3$. \\
\end{itemize}
$\bm{z=2t-1 \;(0.01 \leq 2t-1):}$ We need to show that:
\[
4.76\sqrt{1-t}+2.38\sqrt{2t-1}-\sqrt{\rho}-t\sqrt{\rho}-\frac{5}{\sqrt{\rho}}+\frac{1.5}{\sqrt{\rho}t} \geq 0
\]
The second derivative over $t$ is:
\[
\frac{d^2}{dt^2}=\frac{-1.19}{\sqrt{(1-t)^3}}+\frac{3}{t^3\sqrt{\rho}}-\frac{2.38}{\sqrt{(2t-1)^3}}\,,
\]
 which is always negative for $\rho=1$ and consequently for all $\rho$'s greater than $1$. Hence minimum occurs in $t=0.505, t=\min(\frac{1}{\rho},\frac{3-3\sqrt{1-\rho/3}}{\rho})$. $\frac{3-3\sqrt{1-\rho/3}}{\rho}$ comes from the fact that $2t-1 \leq \rho t^2/3$. For $t=0.505, z= 2t-1=0.01$ which is investigated before. For other cases we can easily see that the expression is always positive. \\
 
$\bm{z=\frac{\rho t^2}{3}:}$ Replacing $z$ into the expression we need to show that:
\[
\resizebox{\columnwidth}{!}{%
$2.38\sqrt{t-\frac{\rho t^2}{3}}+2.38\sqrt{1-t}+2.38t\sqrt{\frac{\rho}{3}}-\sqrt{\rho}-1.5t\sqrt{\rho}-\frac{2}{\sqrt{\rho}} \geq 0$
}
\]
The minimum of this expression for $1 \leq \rho \leq 3$ and $\frac{1}{9}\leq t\leq 0.6$ is in $\rho=1$ and $t=0.6$ which is 0.56. Hence, it is always positive.

~\\
\textbf{Case 1.2}
Now we want to show that for $z < 0.01$ the approximation factor also holds. To show that, we need some other lemmas. \\
\begin{lem}
For case 1 without considering other rectangles, the approximation factor is bounded by $0.75\sqrt{\rho/2}+\sqrt{1/2\rho}$ for $1 \leq \rho \leq 2$.
\begin{proof}
We need to show that 
\[
\max \frac{\sqrt{\rho}+t\sqrt{\rho}+2/\sqrt{\rho}+1.5z/t\sqrt{\rho}}{2(\sqrt{t-z}+\sqrt{z}+\sqrt{1-t})} \leq 0.75\sqrt{\frac{\rho}{2}}+\sqrt{\frac{1}{2\rho}}
\]
 or
\vspace{-1ex}
\begingroup
\setlength{\jot}{1ex} 
\begin{multline*}
\sqrt{2\rho}(1+t)+\frac{2\sqrt{2}}{\rho}+\frac{3z}{t\sqrt{2\rho}} \\ 
-(1.5\sqrt{\rho}+\frac{2}{\sqrt{\rho}})(\sqrt{t-z}+\sqrt{z}+\sqrt{1-t}) \leq 0
\end{multline*}
\endgroup
s.t.
$$z \geq \max\{0,2t-1\}$$
$$z \leq \rho t^2/3$$
$$\frac{1}{3\rho} \leq t \leq \frac{1}{\rho}$$

$$\frac{d^2}{dz^2}=(1.5\sqrt{\rho}+\frac{2}{\sqrt{\rho}})(\frac{1}{4\sqrt{z^3}}+\frac{1}{4\sqrt{(t-z)^3}}) \geq 0$$
Hence, $z=\max(0,2t-1)$ or $z= \rho t^2/3$. \\

If $2t-1 \leq 0:$ In this case, $z=0$.
The expression would become:
\[
\sqrt{2\rho}(1+t)+\frac{2\sqrt{2}}{\sqrt{\rho}}-(1.5\sqrt{\rho}+\frac{2}{\sqrt{\rho}})(\sqrt{t}+\sqrt{1-t})
\]
The maximum of this expression for $\rho \in \bigg[1,2\bigg]$ and $t \in \bigg[\frac{1}{3\rho},0.5\bigg]$ is in $t=0.5$ which is equal to 0.\\

If $2t-1 \geq 0:$ In this case, $z=2t-1$.
The expression would become:
\[
\resizebox{1\columnwidth}{!}{%
$\sqrt{2\rho}(1+t)+\frac{2\sqrt{2}}{\sqrt{\rho}}+\frac{6t-3}{t\sqrt{2\rho}}-(1.5\sqrt{\rho}+\frac{2}{\sqrt{\rho}})(2\sqrt{1-t}-\sqrt{2t-1})$
}
\]
Again, maximum of this expression for defined intervals of $\rho$ and $t$ is in $t=0.5$ which is equal to 0.\\

If $z=\rho t^2 /3$: The expression would become:
\[
\resizebox{1\columnwidth}{!}{%
$\sqrt{2\rho}(1+t)+\frac{2\sqrt{2}}{\rho}+\frac{t\sqrt{\rho}}{\sqrt{2}}-(1.5\sqrt{\rho}+\frac{2}{\sqrt{\rho}})(\sqrt{t-\frac{\rho t^2}{3}}+\sqrt{\frac{\rho t^2}{3}}+\sqrt{1-t})$
}
\]
Here, $t \in \bigg[\frac{1}{3\rho}, \min\{\frac{1}{\rho},\frac{3-\sqrt{9-3\rho}}{\rho}\}\bigg]$. This comes from substituting $z$ in $y = 1-t \geq t-z$.
Hence, the approximation factor is maximized when $t=0.5$ and $z=0$ for $1 \leq \rho \leq 2$.
\end{proof}
\end{lem}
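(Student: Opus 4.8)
Since the lemma isolates Case~1, the quantity to be bounded is the ratio of the combined perimeter of $R_y$, $R_x$ and the whole subtree rooted at $R_z$ to the sum of their lower bounds, with the contribution of the remaining sub-rectangles dropped (i.e., $a=b=0$). Combining $\Perim(R_y)$, $\Perim(R_x)$ and the corollary to \cref{lem:ApxARGe3} for the subtree at $R_z$ in the numerator with the lower bounds $2\sqrt{1-t}$, $2\sqrt{t-z}$, $2\sqrt{z}$ of \cref{lem:ApxLB} in the denominator, this ratio is at most
\[
g(\rho,t,z)=\frac{\sqrt{\rho}+t\sqrt{\rho}+2/\sqrt{\rho}+1.5\,z/(t\sqrt{\rho})}{2\bigl(\sqrt{t-z}+\sqrt{z}+\sqrt{1-t}\bigr)}.
\]
The plan is to maximize $g$ over the region cut out by $y+t=1$, $x+z=t$ and the aspect-ratio constraints on $R$, $R_t$ and $R_z$---which reduce to $z\ge\max\{0,2t-1\}$, $z\le\rho t^{2}/3$ and $1/(3\rho)\le t\le 1/\rho$---and to check that the maximum equals $0.75\sqrt{\rho/2}+\sqrt{1/(2\rho)}$.

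First I would clear denominators: writing $c(\rho):=1.5\sqrt{\rho}+2/\sqrt{\rho}$, the inequality $g\le 0.75\sqrt{\rho/2}+\sqrt{1/(2\rho)}$ is equivalent to $F(\rho,t,z)\le 0$ with
\[
\resizebox{\columnwidth}{!}{$
F(\rho,t,z):=\sqrt{2\rho}\,(1+t)+\frac{2\sqrt{2}}{\sqrt{\rho}}+\frac{3z}{t\sqrt{2\rho}}-c(\rho)\bigl(\sqrt{t-z}+\sqrt{z}+\sqrt{1-t}\bigr).
$}
\]
The structural fact driving the argument is that, for fixed $\rho$ and $t$, the map $z\mapsto F(\rho,t,z)$ is convex: the term $3z/(t\sqrt{2\rho})$ is linear and $-c(\rho)(\sqrt{t-z}+\sqrt{z})$ has second derivative $c(\rho)\bigl(\frac{1}{4}z^{-3/2}+\frac{1}{4}(t-z)^{-3/2}\bigr)\ge 0$. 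Hence $F$ attains its maximum in $z$ at an endpoint of its feasible interval, so it suffices to verify $F\le 0$ in the two cases $z=\max\{0,2t-1\}$ and $z=\rho t^{2}/3$.

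Each of these yields a bivariate inequality on a compact box in $(\rho,t)$ with $\rho\in[1,2]$: (a) $z=0$, $t\in[1/(3\rho),1/2]$; (b) $z=2t-1$, $t\in[1/2,\,\min\{1/\rho,\,(3-\sqrt{9-3\rho})/\rho\}]$; and (c) $z=\rho t^{2}/3$, $t\in[1/(3\rho),\,\min\{1/\rho,\,(3-\sqrt{9-3\rho})/\rho\}]$, the upper limits in (b) and (c) being exactly the constraint $y=1-t\ge x=t-z$. In each case I would substitute and examine the second derivative of the resulting one-variable function of $t$: in (a) and (c) it is positive (the concave $\sqrt{\cdot}$ summands enter $F$ with a minus sign), so $F$ is convex in $t$ and its maximum over the $t$-interval is attained at an endpoint; in (b) the substituted term $3(2t-1)/(t\sqrt{2\rho})$ is concave, so $F''$ has mixed sign and a short estimate is needed before the same conclusion is reached. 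It then remains to evaluate the resulting expressions---now functions of $\rho$ alone---at the finitely many corner values $t\in\{1/(3\rho),\,1/2,\,1/\rho,\,(3-\sqrt{9-3\rho})/\rho\}$ and confirm that each is $\le 0$ on $[1,2]$. The bound is met with equality exactly at $t=1/2$, $z=0$, where $F(\rho,1/2,0)=0$ and $g$ equals $\frac{1.5\sqrt{\rho}+2/\sqrt{\rho}}{2\sqrt{2}}=0.75\sqrt{\rho/2}+\sqrt{1/(2\rho)}$; this also shows the stated bound is tight.

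The bulk of the work---and the main obstacle---is this last, largely computational step. The second-derivative claims in $t$ must be justified over the entire feasible interval, not merely at the easiest value $\rho=1$, and the corner expressions are irrational in $\rho$: in branch~(c) they carry $\sqrt{9-3\rho}$ inside further radicals, so establishing monotonicity in $\rho$ on $[1,2]$ calls for a careful sign analysis of the derivative---or, failing a clean argument, a short interval-arithmetic check. I expect branch~(c) to be the delicate one, though the reported numerical slack there (the branch-(c) expression bottoming out near $0.56$) indicates there is ample room to spare.
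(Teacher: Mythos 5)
Your proposal follows essentially the same route as the paper: reduce to the ratio $g(\rho,t,z)$ under the same feasibility constraints, use convexity of the cleared-denominator expression in $z$ to restrict to the endpoints $z=\max\{0,2t-1\}$ and $z=\rho t^2/3$, and then verify the resulting bivariate inequalities branch by branch, with the maximum attained at $t=1/2$, $z=0$. If anything you are more explicit than the paper about where the remaining work lies (the sign analysis in $t$, especially the mixed-convexity branch $z=2t-1$ and the radicals in branch (c)), which the paper simply asserts.
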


\begin{lem}
For Case 1 without considering other rectangles, the approximation factor is bounded by $\frac{3+\rho}{2+2\sqrt{\rho-1}}$ for $2 \leq \rho \leq 3$.
\begin{proof}
We need to show that 
\[
\max \frac{\sqrt{\rho}+t\sqrt{\rho}+2/\sqrt{\rho}+1.5z/t\sqrt{\rho}}{2(\sqrt{t-z}+\sqrt{z}+\sqrt{1-t})} \leq \frac{3+\rho}{2+2\sqrt{\rho-1}}
\]
 or
\vspace{-1ex}
\begingroup
\setlength{\jot}{1ex} 
\begin{multline*}
(\frac{2}{\sqrt{\rho}}+\sqrt{\rho}(1+t)+\frac{1.5z}{t\sqrt{\rho}})(\frac{1}{\sqrt{\rho}}+\sqrt{1-\frac{1}{\rho}}) \\ 
-(\sqrt{\rho}+\frac{3}{\sqrt{\rho}})(\sqrt{t-z}+\sqrt{z}+\sqrt{1-t}) \leq 0
\end{multline*}
\endgroup
s.t.
$$z \leq \frac{\rho t^2}{3}$$
$$\frac{1}{3} \leq t \leq \frac{1}{\rho}$$

$$\frac{d^2}{dz^2}=(\sqrt{\rho}+\frac{3}{\sqrt{\rho}})(\frac{1}{4\sqrt{z^3}}+\frac{1}{4\sqrt{(t-z)^3}}) \geq 0$$
Hence, $z=0$ or $z= \rho t^2/3$. \\

$\bm{z=0:}$ In this case,
The expression becomes:
\vspace{-1ex}
\begingroup
\setlength{\jot}{1ex} 
\begin{multline*}
(\sqrt{\rho}(1+t)+\frac{2}{\sqrt{\rho}}+\frac{t\sqrt{\rho}}{2})(\frac{1}{\sqrt{\rho}}+\sqrt{1-\frac{1}{\rho}}) \\
-(\sqrt{\rho}+\frac{3}{\sqrt{\rho}})(t\sqrt{\frac{\rho}{3}}+\sqrt{1-t}+\sqrt{t-\frac{t^2\rho}{3}})
\end{multline*}
\endgroup

The maximum of this expression for $\rho \in \bigg[2,3\bigg]$ and $t \in \bigg[\frac{1}{3},\frac{1}{\rho}\bigg]$ is in $t=\frac{1}{3}$, $\rho =3$ which is equal to 0.\\

$\bm{z=\rho t^2 /3}$: The expression would become:
\[
\resizebox{1\columnwidth}{!}{%
$\sqrt{2\rho}(1+t)+\frac{2\sqrt{2}}{\rho}+\frac{t\sqrt{\rho}}{\sqrt{2}}-(1.5\sqrt{\rho}+\frac{2}{\sqrt{\rho}})(\sqrt{t-\frac{\rho t^2}{3}}+\sqrt{\frac{\rho t^2}{3}}+\sqrt{1-t})$
}
\]
The maximum of this expression occurs in $t=\frac{1}{3}$ and $\rho =\frac{1}{3}$ which is $-0.3853<0$.
\\ Hence, the approximation factor is maximized when $z = 0$ and $t = 1/\rho$ for $2 \leq \rho \leq 3$.
\end{proof}
\end{lem}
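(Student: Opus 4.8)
The plan is to reduce the stated ratio bound to a single algebraic inequality in the two free variables $t$ and $z$ (with $\rho$ a parameter) and then exploit convexity twice — once in $z$, once in $t$ — to collapse the optimization to a handful of boundary points. Write $C(\rho)=\tfrac{3+\rho}{2+2\sqrt{\rho-1}}$, and let $N(t,z)=\sqrt{\rho}+t\sqrt{\rho}+\tfrac{2}{\sqrt{\rho}}+\tfrac{1.5z}{t\sqrt{\rho}}$ and $D(t,z)=\sqrt{t-z}+\sqrt{z}+\sqrt{1-t}$ be the numerator and denominator in the statement, so the claim is $\tfrac{N}{2D}\le C(\rho)$ over $\mathcal{F}=\{(t,z):\tfrac13\le t\le\tfrac1\rho,\ 0\le z\le\tfrac{\rho t^2}{3}\}$ with $\rho\in[2,3]$. (Here $t\ge\tfrac13$ is the $\tfrac13$-area bound applied to the compound left child $R_t$, $t\le\tfrac1\rho$ says $R_t$ is cut horizontally, and $z\le\tfrac{\rho t^2}{3}$ says $\AR(R_z)\ge 3$ so the $1.5$-corollary applies; one then checks $z\le t$ and $1-t\ge t-z$ hold automatically.) Using $\tfrac1{\sqrt{\rho}}+\sqrt{1-\tfrac1\rho}=\tfrac{1+\sqrt{\rho-1}}{\sqrt{\rho}}$ and $\sqrt{\rho}+\tfrac3{\sqrt{\rho}}=\tfrac{3+\rho}{\sqrt{\rho}}$ one rewrites $C(\rho)=\tfrac{\sqrt{\rho}+3/\sqrt{\rho}}{2(1/\sqrt{\rho}+\sqrt{1-1/\rho})}$, so $\tfrac{N}{2D}\le C(\rho)$ is equivalent to $F(t,z):=N\bigl(\tfrac1{\sqrt{\rho}}+\sqrt{1-\tfrac1\rho}\bigr)-\bigl(\sqrt{\rho}+\tfrac3{\sqrt{\rho}}\bigr)D\le 0$, which is exactly the inequality displayed in the lemma.

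Second, I would reduce the optimization in $z$ to the endpoints of $[0,\tfrac{\rho t^2}{3}]$ by convexity: the only non-affine dependence of $F$ on $z$ is through $-(\sqrt{\rho}+\tfrac3{\sqrt{\rho}})(\sqrt{t-z}+\sqrt{z})$, and differentiating twice gives $\tfrac{\partial^2 F}{\partial z^2}=(\sqrt{\rho}+\tfrac3{\sqrt{\rho}})\bigl(\tfrac1{4\sqrt{z^3}}+\tfrac1{4\sqrt{(t-z)^3}}\bigr)\ge 0$. Hence $F$ is convex in $z$ and its maximum over $\mathcal{F}$ is attained on one of the curves $z=0$ or $z=\tfrac{\rho t^2}{3}$, so it suffices to verify $F\le 0$ along each.

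Third, I would treat the two curves with a second convexity argument, now in $t$. On $z=0$, $F$ becomes $H(t):=\bigl(\sqrt{\rho}(1+t)+\tfrac2{\sqrt{\rho}}\bigr)\bigl(\tfrac1{\sqrt{\rho}}+\sqrt{1-\tfrac1\rho}\bigr)-\bigl(\sqrt{\rho}+\tfrac3{\sqrt{\rho}}\bigr)(\sqrt{t}+\sqrt{1-t})$ on $t\in[\tfrac13,\tfrac1\rho]$; since $H$ is affine in $t$ minus a positive multiple of the strictly concave $\sqrt{t}+\sqrt{1-t}$, it is strictly convex in $t$, so its maximum is at $t=\tfrac13$ or $t=\tfrac1\rho$. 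A direct substitution gives $H(\tfrac1\rho)=0$ — precisely the equality case from which $C(\rho)$ is read off — while $H(\tfrac13)$ is a one-variable expression in $\rho$ one checks is $\le 0$ on $[2,3]$ (tight only at $\rho=3$, where $\tfrac13=\tfrac1\rho$). On $z=\tfrac{\rho t^2}{3}$, substituting $\sqrt{z}=t\sqrt{\rho/3}$, $\sqrt{t-z}=\sqrt{t-\tfrac{\rho t^2}{3}}$ and $\tfrac{1.5z}{t\sqrt{\rho}}=\tfrac{t\sqrt{\rho}}{2}$ leaves a two-variable function whose $t$-dependence is again affine plus a negative-definite combination of square roots, so for fixed $\rho$ the maximum in $t$ sits at an endpoint of $[\tfrac13,\tfrac1\rho]$; the resulting one-variable bounds in $\rho$ are strictly negative on $[2,3]$ (worst corner $t=\tfrac13$, value about $-0.385$). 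Combining the curves with the reduction in $z$ gives $F\le 0$ on all of $\mathcal{F}$, with equality exactly at $(t,z)=(\tfrac1\rho,0)$, which is the claim.

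The main obstacle is the third step: after substituting a boundary curve one is left with genuinely transcendental expressions in $t$ and $\rho$, and the real content is certifying the signs of the relevant first and second derivatives uniformly over the box $\{(\rho,t):2\le\rho\le3,\ \tfrac13\le t\le\tfrac1\rho\}$ so that each maximum can be pinned to a corner — once that is done the remaining numerical checks are trivial. A secondary subtlety is getting $\mathcal{F}$ exactly right: the bound $t\ge\tfrac13$ must come from the $\tfrac13$-area lemma (not merely from $\AR(R_t)\le 3$, which only gives $t\ge\tfrac1{3\rho}$), and one should confirm that on $z=\tfrac{\rho t^2}{3}$ the constraint $1-t\ge t-z$ is inactive for $\rho\in[2,3]$; a region that is too loose would either make the endpoint optimization infeasible or spoil tightness at $(\tfrac1\rho,0)$.
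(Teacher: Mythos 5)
Your proposal follows essentially the same route as the paper: rewrite the ratio bound as the inequality $F(t,z)\le 0$, use convexity in $z$ to push the maximum to $z=0$ or $z=\rho t^2/3$, and then check each boundary curve over $t\in[\tfrac13,\tfrac1\rho]$. Your version is in fact slightly more careful than the paper's --- you justify the endpoint reduction in $t$ by a second convexity argument and correctly locate the equality case at $(t,z)=(\tfrac1\rho,0)$ for every $\rho$, where the paper only asserts the corner values numerically (and its displayed expression for the $z=\rho t^2/3$ branch appears to carry over the multiplier from the $1\le\rho\le 2$ lemma).
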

For Case 1 if rectangle $R$ in Figure \ref{fig:1_apprx} is not the main rectangle that we intend to partition, It must have a parent and a sibling. We call the sibling $R^\prime$. $R^\prime$ would locate at the bottom/ top of rectangle $R$ or at the left/right side of that. We need to know how many children $R^\prime$ has. First we show that it is either a simple rectangle or has up to four children.
First, remember that we sort the list of areas in descending order. We have $R_y$, $R_x$ and $R_z$ with areas $y$, $x$ and $z$ respectively. We know that $y\geq x \geq z$ and $x$ and $z$ are located at the end of the list of areas and the would sum up and will locate in new ordered list of areas. 
We know that before summing $x+z$ and $y$ together (Before they locate at the end of the list) several double areas could get summed together. We know that they make new areas greater than $x+z$. Making new areas from areas smaller than $x+z$ and $y$ by summing up them in pairs continues until $x+z$ and $y$ be located at the end of the list. Now they will be summed (equal to 1) and relocate in the list. Now in the list there might be one single and several paired areas or just several paired areas after $x+y+z$. Then these area ahead of $x+y+z$ get summed while we know that their summation is greater than $x+y+z=1$. This continues until it is time for $x+y+z$ to get summed with another area, while we know that the current areas in the list  are consisted from at most 4 areas from the original list.  \\
Here we separate the discussion into two cases; when $R^\prime$ is a simple rectangle and when it a compound rectangle.

~\\
\textbf{$\bm{R^{\prime}}$ is a compound rectangle:}
Suppose that $R^\prime$ is a compound rectangle containing 2, 3 or 4 sub-rectangles. 
Moreover, since when $x+z$ and $y$ locate at the end of the list, the area of combinations of two areas which are located before $x+z$ and $y$ is greater than $\max\{2x, y\}$, considering it with $z\leq \frac{x}{2}$ results in $a_i,a_{i-1}\geq x\geq \frac{2}{7}$. Also, the area of $R\prime$ cannot be greater than 2. Because when $y$ and $x+z$ are summed there are two areas in the list ahead of them with area less than $x+y+x=1$ which are children of $R\prime$ (consisting single or double area).
If $R^\prime$ locate at the bottom/top of rectangle $R$ we know that there are at least two sub-rectangle in them each having area greater than $x$ and less than $x+z$. Remember that $z<0.01$ and these two sub-rectangles hence the areas of these two are very close. We claim that these two sub-rectangles together has $AR \leq 3$. It is not hard to show it using lemma 2 since we know that the AR of rectangle $R$ parent is less than 3 and the area of all sub-rectangles inside $R\prime$ are very close in case of 2 and 4 sub-rectangles because all of them are less than $x+z$ and greater than $x$. In the case of 3 sub-rectangles assume that before having 3 sub-areas $a_0$ is the single child of $R\prime$ and $a_1$ and $a_2$ are the ones which are summed together. So, at some point the list was:
$$..., a_0, x+z, y, a_1, a_2$$
Note that the order of $x+z$ and $y$ could be switched.
$$..., a_0, a_1+a_2, x+z, y$$
Again in addition to order of $x+z$ and $y$ the order of $a_0$ and $a_1+a_2$ could be switched. Then the list will be updated to
$$...,x+z+y , a_0, a_1+a_2 \,,$$ and then
$$...,a_0+a_1+a_2,x+z+y \,,$$
where finally $x+z+y$ is getting summed with these three sub-areas. Hence, in the case that $a_0$ is less than $a_1+a_2$, $a_0 \geq \max\{x+z,y\} \geq 0.5$ and $a_1+a_2\leq 1$. Hence the AR of $a_0$ and compound rectangle having $a_1+a_2$ is less than 3. 
In the case that  $a_0$ is greater than $a_1+a_2$, $a_0\leq 1$ and $a_1+a_2 \geq \max\{2x,y\}$ which knowing that $z\leq 0.01$ results in $a_1+a_2\geq 0.66$. Thus again the AR of $a_0$ and compound rectangle having $a_1+a_2$ is less than 3. 
The same result holds for when $R^\prime$ would locate at the right/left side of rectangle $R$, because these two sub-rectangles always have area less than $x+y+z=1$ and hence the AR of $R$ containing $x,y,z$ is less than 3, the AR of the rectangle containing these two sub-rectangles is also less than 3. \\
As we showed what we claimed now based on the fact that the area of two sub-rectangles are very close the AR of the bigger one is always less than 2 and the other one less than 3. 
Moreover, we know that each of them has an area greater than $0.33$.

~\\
\textbf{$\bm{R^{\prime}}$ is a simple rectangle:}
$R^\prime$ is a simple rectangle, its area is greater than $\max\{x+z,y\}$ and hence greater than 0.5.
If it is not a forced simple rectangle the AR of $R$ and $R^\prime$ together must be less than 3, since otherwise,it would be defined as $z$. If $R^\prime$ is at the bottom/top the worst AR happens when $\rho = 1.5$ and $a_{R^\prime} = 0.5$ and the AR is 3. That is because in order to have $R^\prime$ at the top/bottom of $R$ and not at the right/left side of it the height of parent of $R$ and $R^\prime$ should be greater than $\sqrt{\rho}$ which is the width and hence the AR of $R^\prime$ in the case of its width is greater than its height, is $\frac{\sqrt{\rho}}{\sqrt{\rho}-\frac{1}{\sqrt{\rho}}}$. Therefore, the area should be greater than $\sqrt{\rho}(\sqrt{\rho}-\frac{1}{\sqrt{\rho}})=\rho-1$
Together with area greater than 0.5, we know that areas should be greater than $\max\{0.5,\rho-1\}$. On the other hand, since the AR of $R$ and $R^\prime$ together should not be greater than 3, the area of $R^\prime$ should be less than $3\rho-1$. \\
In the situation that $R^\prime$ is at the left/right side of $R$, if the area of $R^\prime$ is $A_R^\prime$ its AR should be $\rho A_R^\prime$ in a way that overall AR of $R$ and $R^\prime$ stay under 3, i.e., $A_{R^\prime}\leq 3/\rho-1$. \\
Now suppose that $R^\prime$ is a forced rectangle, in this case its area is greater than 1 and as a result in approximation factor calculation we can use the lower bound of $w+h$. Clearly, this lower bound reduces the overall approximation factor more than having a rectangle with area $A_R^\prime$ AR of $\rho A_R^\prime$ while the overall AR of $R$ and $R^\prime$ stay under 3.

\begin{lem}
If Rectangle $R$ in Case 1 (Figure \ref{fig:1_apprx}) is not the main rectangle that we want to partition the approximation factor of Algorithm \ref{alg:approxAlg} is 1.203.
\begin{proof}
By lemma 2.8.4 and 2.8.5 we have the approximation factors of rectangle $R$ in case 1 without considering extra rectangles. 
Now we consider sub-rectangles in $R^\prime$ in the calculation of approximation factor. First for the case that $R^\prime$ is not simple. We showed that there are at least two sub-rectangles in $R^\prime$ that has area greater than 0.33 and one of them $AR \leq 2$ and the other $AR \leq 3$. We include them to the approximation factors we had. For $ 1 \leq \rho \leq 2$ it will be:
$$\frac{1.5\sqrt{\rho}+\frac{2}{\sqrt{\rho}}+4\sqrt{0.11}+3\sqrt{0.165}}{2\sqrt{2}+4\sqrt{0.33}}$$
$4\sqrt{0.11}+3\sqrt{0.165}$ comes from putting $wh =0.33$ and $w/h=3$ and $w/h=2$ respectively.
The expression is maximized for $\rho =2$ and the value is 1.1862.
For $ 2 \leq \rho \leq 3$ it will be:
$$\frac{\sqrt{\rho}+\frac{3}{\sqrt{\rho}}+4\sqrt{0.11}+3\sqrt{0.165}}{2(\frac{1}{\sqrt{\rho}}+\sqrt{1-\frac{1}{\rho}})+4\sqrt{0.33}}$$
This is maximized for $\rho =2$ and the value is 1.1863.\\
Now suppose that $R^\prime$ is a simple rectangle at the left/right of $R$ then $0.5 \leq A_{R^\prime}\leq 3/\rho-1$ the approximation factor would be 
$$\frac{1.5\sqrt{\rho}+\frac{2}{\sqrt{\rho}}+A_{R^\prime}\sqrt{\rho}+\frac{1}{\sqrt{\rho}}}{2\sqrt{2}+2\sqrt{A_{R^\prime}}}$$
Note that since $0.5 \leq 3/\rho-1$, obviously $\rho \leq 2$ and we do not need approximation factor for $\rho \geq 2$
The maximum of this expression is 1.2029 for $\rho =1.5$ and  $A_{R^\prime} = 0.5$\\
Now suppose that $R^\prime$ is a simple rectangle at the top/bottom of $R$ then $\max\{0.5,\rho-1\} \leq A_{R^\prime}\leq 3\rho-1$ the approximation factor would be 
$$\frac{1.5\sqrt{\rho}+\frac{2}{\sqrt{\rho}}+\sqrt{\rho}+\frac{A_{R^\prime}}{\sqrt{\rho}}}{2\sqrt{2}+2\sqrt{A_{R^\prime}}}$$
for $1 \leq \rho \leq 2$ and 
$$\frac{\sqrt{\rho}+\frac{3}{\sqrt{\rho}}+\sqrt{\rho}+\frac{A_{R^\prime}}{\sqrt{\rho}}}{2(\frac{1}{\sqrt{\rho}}+\sqrt{1-\frac{1}{\rho}})+2\sqrt{A_{R^\prime}}}$$
for $2 \leq \rho \leq 3$.
The maximum of these expressions on the defined intervals  is 1.2029 for $\rho = 1.5$ and $A_{R^\prime} =0.5$.
\end{proof}
\end{lem}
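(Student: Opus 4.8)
The plan is to take the two bounds already established for Case~1 ``in isolation''---the internal ratio of $R$ is at most $0.75\sqrt{\rho/2}+\sqrt{1/(2\rho)}$ for $1\le\rho\le2$ and at most $(3+\rho)/(2+2\sqrt{\rho-1})$ for $2\le\rho\le3$; call this piecewise bound $f(\rho)$---and tighten them by accounting explicitly for the sibling $R'$ of $R$ and its descendants, which must exist because $R$ is not the root. Writing $N$ for the perimeter-sum bound and $D$ for the lower-bound sum of the rectangles strictly inside $R$ (so the two preceding lemmas give $N/D\le f(\rho)$, and $N$ already absorbs a possibly high-aspect-ratio $R_z$ through the preceding corollary), the overall ratio over the parent of $R$ is $(N+p)/(D+\ell)$, where $(p,\ell)$ is the perimeter-sum / lower-bound pair contributed by $R'$. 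It suffices to show this quotient never exceeds $1.203$.

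The first step is to pin down the structure of $R'$. Since the two areas merged inside $R$ are the two smallest in the globally sorted list, a short bookkeeping argument on the order in which Algorithm~\ref{alg:approxAlg} merges tail areas shows $R'$ is either simple or compound with at most four leaves, and $\Area(R')\le2$. Combining this with Lemma~\ref{lem:ApxChildrenAspectRatios} and the fact that the parent of $R$ and $R'$ has aspect ratio at most $3$ yields the admissible windows: if $R'$ is compound it contains two nearly equal leaves of area at least $0.33$, one with aspect ratio at most $2$ and one at most $3$; if $R'$ is a non-forced simple rectangle to the left or right of $R$ then $0.5\le\Area(R')\le3/\rho-1$ (which forces $\rho\le2$), and above or below $R$ then $\max\{0.5,\rho-1\}\le\Area(R')\le3\rho-1$; and a forced simple $R'$ meets its tight bound $\width(R')+\height(R')$ and can only lower the ratio.

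The second step handles the four configurations separately. For $R'$ compound, take $p=4\sqrt{0.11}+3\sqrt{0.165}$ (the perimeters of two area-$0.33$ leaves at aspect ratios $3$ and $2$) and $\ell=4\sqrt{0.33}$; then $(N+p)/(D+\ell)$ becomes a one-variable function of $\rho$ on $[1,2]$ and on $[2,3]$, and a boundary check at $\rho=2$ leaves it below about $1.187$. For $R'$ simple to the left or right take $p=\Area(R')\sqrt\rho+1/\sqrt\rho$, $\ell=2\sqrt{\Area(R')}$; above or below take $p=\sqrt\rho+\Area(R')/\sqrt\rho$, $\ell=2\sqrt{\Area(R')}$. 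Each gives a two-variable function of $(\rho,\Area(R'))$ over the admissible box, and the standard recipe---sign of the second derivative in each variable, then evaluation at the corners---gives at most $1.203$, the governing corner being $\rho=1.5$, $\Area(R')=0.5$. Taking the maximum over the four cases (the forced-simple one being dominated) gives the lemma.

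The main obstacle is not any single estimate but (i) getting the bookkeeping of the first step exactly right, since it hinges on the precise order in which Algorithm~\ref{alg:approxAlg} merges tail areas, and (ii) verifying that the internal worst case of $R$ is not displaced once $R'$'s fixed pair $(p,\ell)$ is appended. Point (ii) reduces to a monotonicity check in $D$: since $\ell f(\rho)>p$ in the relevant ranges, $(f(\rho)D+p)/(D+\ell)$---an upper bound for $(N+p)/(D+\ell)$---is increasing in $D$, so it suffices to evaluate it at the largest $D$ compatible with the aspect-ratio constraints $\frac{1}{3\rho}\le t\le\frac{1}{\rho}$, $z\le\rho t^2/3$, $z\ge\max\{0,2t-1\}$, i.e.\ at a boundary point of the $(t,z)$-region, and this stays below $1.203$. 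Because the feasible regions for $(\rho,\Area(R'))$ in the two simple cases are cut out by several aspect-ratio inequalities at once, one must be careful to rule out interior critical points and confirm the maxima lie on the boundary.
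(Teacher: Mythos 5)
Your proposal follows the paper's own argument essentially step for step: the same decomposition into $R'$ compound versus simple (left/right and top/bottom), the same admissible windows $0.5\le\Area(R')\le3/\rho-1$ and $\max\{0.5,\rho-1\}\le\Area(R')\le3\rho-1$, the same contributed pairs $p=4\sqrt{0.11}+3\sqrt{0.165}$, $\ell=4\sqrt{0.33}$ in the compound case, and the same governing corner $\rho=1.5$, $\Area(R')=0.5$ yielding $1.2029$. The only addition is your explicit monotonicity-in-$D$ check justifying that appending $(p,\ell)$ does not displace the internal worst case of $R$, which the paper leaves implicit; otherwise the two arguments coincide.
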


~\\
\textbf{Case 2}
Consider a rectangle R with aspect ratio $(\rho)$ less than 3 which is dissected into two rectangles $R_y$ which is a simple rectangle and $R_t$ which is a compound rectangle having aspect ratio less than 3 consisting a simple rectangle as its left child $(R_x)$ and $R_z$ as its right child which can be
either a compound or a simple rectangle and $R_z$ is located at the right side of $R_x$. For simplicity, suppose that area of R is 1 and its width is greater than its height (Figure \ref{fig:2_apprx}) 
\begin{figure}[!ht]
    \centering
    \includegraphics[scale=0.5]{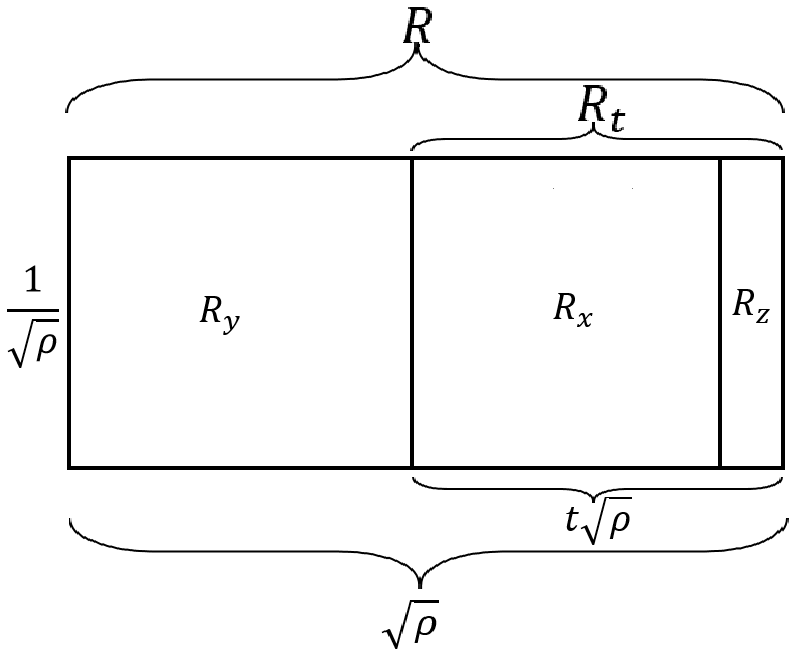}
    \caption{Rectangle instnace of Case 2 in the proof of approximation factor.}
    \label{fig:2_apprx}
\end{figure}
We will show that the worst state of this case is equivalent to the worst case of Case 1 and hence everything we proved for Case 1 also holds here. Therefore, we need to show that the approximation factor in this case is less than $\frac{\frac{3}{\sqrt{\rho}}+\sqrt{\rho}}{2(\sqrt{\frac{1}{\rho}}+\sqrt{1-\frac{1}{\rho}})}$ for $2\leq \rho \leq 3$, i.e.,
$$\frac{\frac{3}{\sqrt{\rho}}+\sqrt{\rho}+1.5\sqrt{\rho}z}{2(\sqrt{t-z}+\sqrt{1-t}+\sqrt{z})} \leq \frac{\frac{3}{\sqrt{\rho}}+\sqrt{\rho}}{2(\sqrt{\frac{1}{\rho}}+\sqrt{1-\frac{1}{\rho}})}$$ 
or 
\vspace{-1ex}
\begingroup
\setlength{\jot}{1ex} 
\begin{multline*}
(\frac{3}{\sqrt{\rho}}+\sqrt{\rho}+1.5\sqrt{\rho}z)(\sqrt{\frac{1}{\rho}}+\sqrt{1-\frac{1}{\rho}}) \\ 
-(\frac{3}{\sqrt{\rho}}+\sqrt{\rho})(\sqrt{t-z}+\sqrt{1-t}+\sqrt{z}) \leq 0
\end{multline*}
\endgroup
s.t.
$$z\leq \frac{1}{3\rho},$$
$$t\geq \frac{1}{\rho},$$
$$z\geq 2t-1$$

$$\frac{d^2}{dz^2} = (\frac{1}{4\sqrt{(t-z)^3}}+\frac{1}{4\sqrt{z^3}})(\frac{3}{\sqrt{\rho}}+\sqrt{\rho}) \geq 0$$
Hence, $z =\max\{0,2t-1\}$ or $z=\frac{1}{3\rho}$.

~\\
If $z=0:$ The expression will be $$(\frac{3}{\sqrt{\rho}}+\sqrt{\rho})(\frac{1}{\sqrt{\rho}}+\sqrt{1-\frac{1}{\rho}}-\sqrt{t}-\sqrt{1-t})$$
Since $t\geq \frac{1}{\rho}$, the expression is always negative.

~\\
If $z=2t-1:$ The expression will be
\vspace{-1ex}
\begingroup
\setlength{\jot}{1ex} 
\begin{multline*}
(\frac{3}{\sqrt{\rho}}+\sqrt{\rho}+1.5\sqrt{\rho}(2t-1))(\frac{1}{\sqrt{\rho}}+\sqrt{1-\frac{1}{p}}) \\ 
-(\frac{3}{\sqrt{\rho}}+\sqrt{\rho})(\sqrt{2t-1}+2\sqrt{1-t})
\end{multline*}
\endgroup
Clearly, $\frac{d^2}{dt^2}\geq 0$ and the maximum value of the expression is in $t=0.5$ or $t=0.6$ (because $t\leq 0.6$), In $t=0.5$ the maximum value of expression is 0 and for $t=0.6$ it is -0.378.
\\If $z=\frac{1}{3\rho}:$
The expression will be
\vspace{-1ex}
\begingroup
\setlength{\jot}{1ex} 
\begin{multline*}
(\frac{3}{\sqrt{\rho}}+\sqrt{\rho}+\frac{1.5}{\sqrt{3\rho}})(\frac{1}{\sqrt{\rho}}+\sqrt{1-\frac{1}{p}}) \\
-(\frac{3}{\sqrt{\rho}}+\sqrt{\rho})(\sqrt{\frac{1}{3\rho}}+\sqrt{t-\frac{1}{3\rho}}+\sqrt{1-t})
\end{multline*}
\endgroup
Since $\frac{d^2}{dt^2}\geq 0$, either $t=\frac{1}{\rho}$ or $t=\frac{1}{2}+\frac{1}{6\rho}$ which has a maximum of $-0.1417$ in $\rho=3$. \\

If $\rho\leq 2$ we need to show that:
$$\frac{\frac{3}{\sqrt{\rho}}+\sqrt{\rho}+1.5\sqrt{\rho}z}{2(\sqrt{t-z}+\sqrt{1-t}+\sqrt{z})} \leq 0.75\sqrt{\frac{\rho}{2}}+\sqrt{\frac{1}{2\rho}}$$
or 
\vspace{-1ex}
\begingroup
\setlength{\jot}{1ex} 
\begin{multline*}
\sqrt{2}(\frac{3}{\sqrt{\rho}}+\sqrt{\rho}+.5z\sqrt{\rho}) \\ 
-(1.5\sqrt{\rho}+\frac{2}{\sqrt{\rho}})(\sqrt{z}+\sqrt{t-z}+\sqrt{1-t})\leq 0
\end{multline*}
\endgroup
s.t $$z \geq \max\{0,2t-1\}$$
$$z \leq 1/3\rho$$
$$\frac{1}{\rho} \leq t \leq 0.5+0.5z$$
\\Since $\frac{d^2}{dz^2}\geq 0$, either $z=\max\{0,2t-1\}$ or $z=\frac{1}{3\rho}$.

~\\
If $z=0$: The only valid value for $\rho$ is 2 and it is similar to previous analysis.

~\\
If $z=2t-1:$ 
\[
\resizebox{1\columnwidth}{!}{%
$\sqrt{2}(\frac{3}{\sqrt{\rho}}-0.5\sqrt{\rho}+3t\sqrt{\rho})-(1.5\sqrt{\rho}+\frac{2}{\sqrt{\rho}})(2\sqrt{1-t}+\sqrt{2t-1})$
}
\]
Clearly, $\frac{d^2}{dt^2}\geq 0$ and the maximum value of the expression is in $t=0.5$ or $t=0.6$ (because $t\leq 0.6$), In $t=0.5$ the maximum value of expression is 0 and for $t=0.6$ it is -0.45.
\\If $z=\frac{1}{3\rho}:$ 
The expression would be
\[
\resizebox{1\columnwidth}{!}{%
$\sqrt{2}(\frac{3}{\sqrt{\rho}}+\sqrt{\rho}+\frac{1}{2\sqrt{\rho}})-(\frac{2}{\sqrt{\rho}}+1.5\sqrt{\rho})(\sqrt{\frac{1}{3\rho}}+\sqrt{t-\frac{1}{3\rho}}+\sqrt{1-t})$
}
\]
Since $\frac{d^2}{dt^2}\geq 0$, either $t=\frac{1}{\rho}$ or $t=\frac{1}{2}+\frac{1}{6\rho}$ which have a maximum of $-0.13$ in $\rho=5/3$. Note that $\rho \geq 5/3$. 

~\\
~\\
\textbf{Case 3:}
Consider a rectangle R with aspect ratio $(\rho)$ less than 3 which is dissected into two rectangles $R_y$ which is a compound rectangle consisting rectangles $y_1$ and $y_2$ divided horizontally and $R_t$ which is a compound rectangle having aspect ratio less than 3 consisting two rectangles $R_x$ and $R_z$ (such as case 1) divided horizontally. For simplicity, suppose that area of $R$ is 1 and its width is greater than its height (Figure \ref{fig:3_apprx}).
\begin{figure}[!ht]
    \centering
    \includegraphics[scale=0.6]{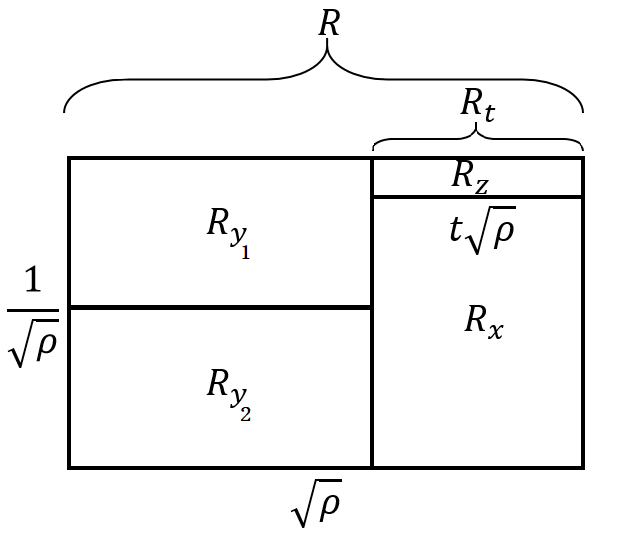}
    \caption{Rectangle instance of Case 3 in the proof of approximation factor.}
    \label{fig:3_apprx}
\end{figure}
For this case to be generated, at some point the list is like $$..., y_1, y_2, x, z,$$
then
$$..., x+z, y_1, y_2$$
and
$$..., y_1+y_2, x+z$$
Therefore, we know that $x+z \geq y_1\geq y_2 \geq x$
$$y_1+y_2 \geq 2x\Rightarrow 2(t-z)\leq 1-t \Rightarrow z \geq 1.5t-0.5$$
Moreover,
$$\frac{\rho t^2}{z} \geq 3 \Rightarrow z \leq \frac{\rho t^2}{3}$$
We also have
$$\frac{1}{\sqrt{\rho}}\geq \sqrt{\rho}(1-t) \Rightarrow t \geq 1-\frac{1}{\rho}$$
On the other hand,
$$t\sqrt{\rho} \leq \frac{1}{\sqrt{\rho}} \Rightarrow t\leq \frac{1}{\rho}$$
We show that the approximation factor for this case is less than 1.2.
$$\frac{2\sqrt{\rho}+\frac{2}{\sqrt{\rho}}+\frac{1.5z}{t\sqrt{\rho}}}{2(\sqrt{t-z}+\sqrt{z}+\sqrt{y_1}+\sqrt{y_2})} \leq 1.2$$
We know that $y_1+y_2=1-t$. In this situation the minimum of them occurs when $y_1$ and $y_2$ are extremely different from each other. Thus, here that we have $y_2\geq x$ the minimum is in $y_2=x$ and $y_1= 1-t-y_2= 1-t-t+z=1-2t+z$ 
So we need to show that
$$2\sqrt{\rho} +\frac{2}{\rho}+\frac{1.5z}{t\sqrt{\rho}}-2.4(2\sqrt{t-z}+\sqrt{z}+\sqrt{1-2t+z}) \leq 0$$
$$\frac{d^2}{d\rho^2} = -\frac{1}{2\sqrt{\rho^3}}+\frac{3}{2\rho^2\sqrt{\rho}}+\frac{9z}{8t\rho^2\sqrt{\rho}} \geq 0$$
Hence, either $\rho = 1$ or $\rho = \min\{\frac{1}{t}, \frac{1}{1-t}\}= \frac{1}{1-t}$.

~\\
If $\rho=1:$ The expression will be 
$$4+\frac{1.5z}{t}-2.4(2\sqrt{t-z}+\sqrt{z}+\sqrt{1-2t+z})$$
For this expression we have $\frac{d^2}{dz^2} \geq 0$
and the maximum is either $z=1.5t-0.5$ or $z=\frac{\rho t^2}{3}$.

~\\
If $z= 1.5t-0.5:$ Replacing $z$ with $1.5t-0.5$ for $\frac{1}{3}\geq t,$ we have the maximum in $t=\frac{1}{3}$ which is -0.1569.

~\\
If $z=\frac{\rho t^2}{3}:$ Replacing $z$ with $z=\frac{\rho t^2}{3}$ for $t\leq 0.36255$ the minimum of expression is in the extreme point of $t$ and is -0.3859. Note that the interval for $t$ in this case is defined by replacing $z$ by $\frac{\rho t^2}{3}$ in $z \geq 1.5t-0.5$.

~\\
Now if $\rho=\frac{1}{1-t}$: The expression would become
\[
\resizebox{1\columnwidth}{!}{%
$\frac{2}{\sqrt{1-t}}+2\sqrt{1-t}+\frac{1.5z\sqrt{1-t}}{t}-2.4(2\sqrt{t-z}+\sqrt{z}+\sqrt{1-2t+z})$
}
\]
Second derivative of this over $z$ is positive and the maximum of expression would be on the extreme points, $z=1.5t-0.5$ or  $z=\frac{\rho t^2}{3}=\frac{t^2}{3-3t}$.
Replacing $z$ with both of these values results in the maximum of expression in $t=\frac{1}{3}$ with negative values -0.0744 and -0.3055 respectively.

~\\
~\\
\textbf{Case 4}
This case is similar to case 3 with the difference that $y_1$ and $y_2$ are divided with a vertical line. (Figure \ref{fig:4_apprx})
\begin{figure}[!ht]
    \centering
    \includegraphics[scale=0.5]{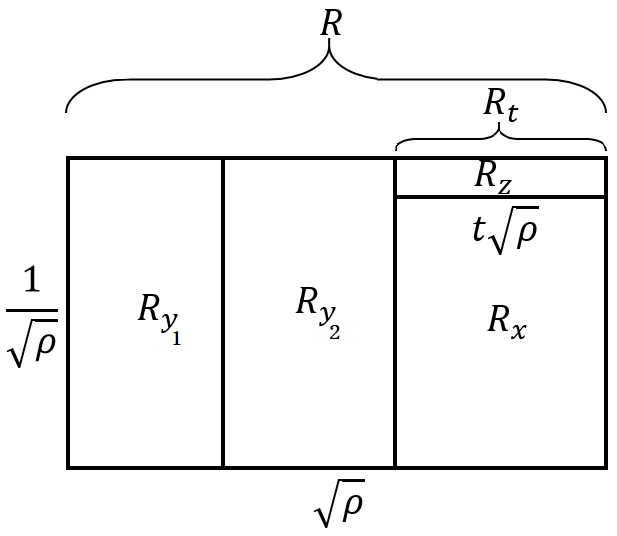}
    \caption{Rectangle instance of Case 4 in the proof of approximation factor.}
    \label{fig:4_apprx}
\end{figure}
So, we need to show that:
$$\frac{\frac{3}{\sqrt{\rho}}+\sqrt{\rho}+t\sqrt{\rho}+\frac{1.5z}{t\sqrt{\rho}}}{2(\sqrt{z}+2\sqrt{t-z}+\sqrt{1-2t+z})}\leq 1.2$$ or
\[
\resizebox{1\columnwidth}{!}{%
$\frac{3}{\sqrt{\rho}}+\sqrt{\rho}+t\sqrt{\rho}+\frac{1.5z}{t\sqrt{\rho}}-2.4(\sqrt{z}+2\sqrt{t-z}+\sqrt{1-2t+z}) \leq 0$
}
\]
s.t. 
$$1-t \geq \frac{1}{\rho} \geq t$$ and
$$1.5t-0.5 \leq z \leq \frac{\rho t^2}{3}$$
Since we know that
$$\frac{d^2}{d\rho^2}=\frac{1}{4\rho^2\sqrt{\rho}}(-\rho-t\rho+9+\frac{4.5z}{t}) \geq 0$$,
we will have $\rho=\frac{1}{t}$ or $\rho=\frac{1}{1-t}$.

~\\
If $\rho =\frac{1}{t}$: The expression will be
$$\frac{1}{\sqrt{t}}+4\sqrt{t}+\frac{1.5z}{\sqrt{t}}-2.4(\sqrt{z}+2\sqrt{t-z}+\sqrt{1-2t+z})$$

Since second derivative of the expression over $z$ is always positive, $z=\frac{\rho t^2}{3}$ or $z=1.5t-0.5$. 

~\\
If $ z=\frac{\rho t^2}{3} = \frac{t}{3}:$ The expression will be 
$$\frac{1}{\sqrt{t}}+4.5\sqrt{t}-2.4(\sqrt{\frac{t}{3}}+2\sqrt{\frac{2t}{3}}+\sqrt{1-\frac{5t}{3}})$$
This expression is negative for any number in $[0.17,0.54]$ which includes $t$.

~\\ 
If $z = 1.5t-0.5$ the expression is:
$$\frac{0.25}{\sqrt{t}}+6.25\sqrt{t}-2.4(\sqrt{1.5t-0.5}+3\sqrt{0.5-0.5t})$$ which is negative for $[1/3,0.54].$

~\\
If $\rho =\frac{1}{1-t}$: The expression will be
$$\frac{1}{\sqrt{t}}+4\sqrt{t}+\frac{1.5z}{\sqrt{t}}-2.4(\sqrt{z}+2\sqrt{t-z}+\sqrt{1-2t+z})$$
Similar to previous expression replacing $\rho = \frac{1}{1-t}$ and taking derivative over $z$, the value of derivative is always positive which results in $z=\frac{\rho t^2}{3}$ or $z=1.5t-0.5$. Replacing them in the expression again results in only negative values for expression.

~\\
~\\
\textbf{Case 5} Consider a rectangle $R$ with aspect ratio ($\rho$) greater than 3 which is dissected into two rectangles $R_y$ which is a simple rectangle and $R_t$ which is a compound rectangle having aspect ratio less than 3 consisting a simple rectangle as its left child ($R_x$) and $R_z$ as its right child which can be either a compound or a simple rectangle and $R_z$ is located on top of $R_x$. For simplicity, suppose that area of $R$ is 1 and its width is greater than its height (Figure \ref{fig:5_apprx}).
\begin{figure}[!ht]
    \centering
    \includegraphics[scale=0.5]{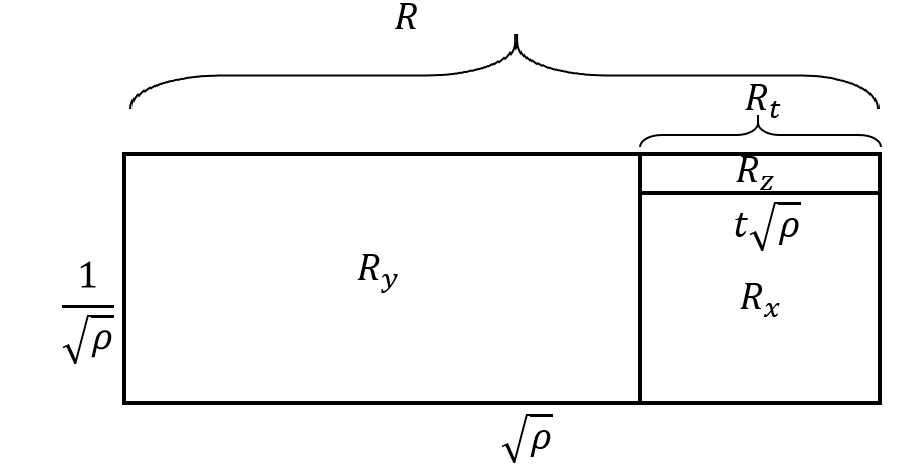}
    \caption{Rectangle instnace of Case 5 in the proof of approximation factor.}
    \label{fig:5_apprx}
\end{figure}
~\\
In this case, where we have $\rho\geq 3$, either $R$ should have belonged to $z$ or $R$ is part of $R_0$. If former happens we are done, but if latter happens we need to show that the approximation factor is less than $1.2$ or
$$\frac{\sqrt{\rho}+t\sqrt{\rho}+\frac{2}{\sqrt{\rho}}+\frac{1.5z}{t\sqrt{\rho}}}{2(\sqrt{z}+\sqrt{t-z})+\sqrt{\rho}(1-t)+\frac{1}{\sqrt{\rho}}} \leq 1.2$$ 
or 
\[
\resizebox{1\columnwidth}{!}{%
$\sqrt{\rho}+t\sqrt{\rho}+\frac{2}{\sqrt{\rho}}+\frac{1.5z}{t\sqrt{\rho}} - 1.2(2(\sqrt{z}+\sqrt{t-z})+\sqrt{\rho}(1-t)+\frac{1}{\rho}) \leq 0$
}
\]
subject ot the constraints be fulfilled.
Since second derivative over $z$ is always positive, $z=0$ or $z=\frac{\rho t^2}{3}$.

~\\
If $z = 0$: The expression becomes
$$\sqrt{\rho}+t\sqrt{\rho}+\frac{2}{\sqrt{\rho}}- 1.2(2\sqrt{t}+\sqrt{\rho}(1-t)+\frac{1}{\sqrt{\rho}})$$
Here, second derivative over $t$ is always positive. Hence, $t=\frac{1}{\rho}$ or $t=\frac{1}{3\rho}$.

~\\If $t=\frac{1}{\rho}$: 
$$\frac{0.6}{\sqrt{\rho}}-0.2\sqrt{\rho}$$
For $\rho \geq 3$ maximum of this expression is equal to 0 at $\rho=3$.

~\\
If $t=\frac{1}{3\rho}$: 
$$\frac{4.6}{3\sqrt{\rho}}-0.2\sqrt{\rho}-\frac{2.4}{\sqrt{3\rho}}$$
For $\rho \geq 3$ maximum of this expression is equal to -0.26 at $\rho=3$.

~\\
If $z = \frac{\rho t^2}{3}$: The expression will be
\[
\resizebox{1\columnwidth}{!}{%
$\sqrt{\rho}+1.5t\sqrt{\rho}+\frac{2}{\sqrt{\rho}} - 1.2(\frac{2t\sqrt{\rho}}{\sqrt{3}}+2\sqrt{t-\frac{\rho t^2}{3}}+\sqrt{\rho}(1-t)+\frac{1}{\sqrt{\rho}})$
}
\]
The value of this expression on defined intervals is on $\rho=3$ which is -0.38.

~\\
~\\
\textbf{Case 6} Consider a rectangle $R$ with aspect ratio ($\rho$) greater than 3 which is dissected into two rectangles $R_y$ which is a simple rectangle and $R_t$ which is a compound rectangle having aspect ratio less than 3 consisting a simple rectangle as its left child ($R_x$) and $R_z$ as its right child which can be either a compound or a simple rectangle and $R_z$ is located at the right side of $R_x$. For simplicity, suppose that area of $R$ is 1 and its width is greater than its height (Figure \ref{fig:6_apprx}).
\begin{figure}[!ht]
    \centering
    \includegraphics[scale=0.5]{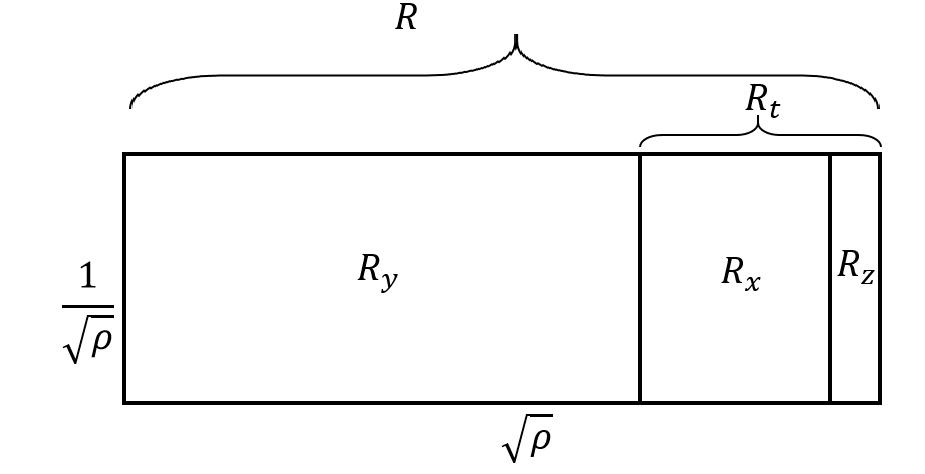}
    \caption{Rectangle instance of Case 6 in the proof of approximation factor.}
    \label{fig:6_apprx}
\end{figure}
~\\
Similar to Case 5, we have $\rho\geq 3$ and, either $R$ should have belonged to $z$ or $R$ is part of $R_0$. If former happens we are done, but if latter happens, $R_y$ would be a part of main rectangle. The proves for case 2 also holds here and the worst case of this case is similar to worst case of case 5 which we showed the approximation factor is less than 1.2.

~\\
~\\
\textbf{Case 7} Suppose that $R_0$ is a rectangle with aspect ratio ($\rho$) which using our approximation algorithm is dissected into two rectangles $R_x$ which is a simple rectangle and $R_z$ which is either a simple or compound rectangle and $R_x$ and $R_z$ are divided by a vertical line. For simplicity, suppose that area of $R$ is 1 and its width is greater than its height (Figure \ref{fig:7_apprx} ).
\begin{figure}[!ht]
    \centering
    \includegraphics[scale=0.5]{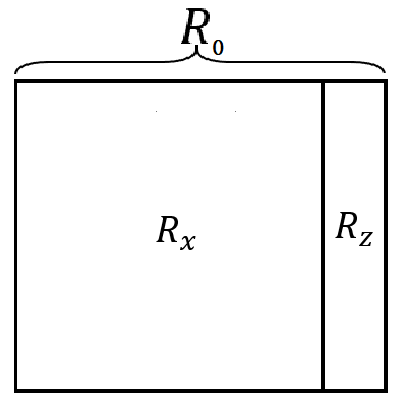}
    \caption{Rectangle instance of Case 7 in the proof of approximation factor.}
    \label{fig:7_apprx}
\end{figure}
~\\
In this case, if width of $R_x$ is greater than its height it would be a forced rectangle. On the other hand most part of $R_z$ is composed of either forced rectangles or rectangles with aspect ratio less than 3. The reason behind this is that either height of those rectangles are greater than their width or unless the very end of the area list the rectangles that are generated inside $z$ in the middle steps have $AR\leq 3$. 
Hence, in the worst case we can consider the big portion of $z$ has $AR=3$ and a small portion has $AR>3$ which is not a forced rectangle and may include some sub-rectangles.\\
To do that we separate these two parts. The small portion cannot have an area greater than $\frac{z^2\rho}{3}$ since its AR should be greater than 3 (Figure \ref{fig:8_apprx}).
\begin{figure}
    \centering
    \includegraphics[scale=0.5]{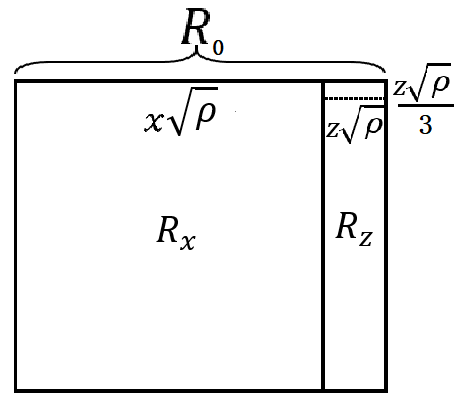}
    \caption{Rectangle mentioned in Case 7 in the proof of approximation factor}
    \label{fig:8_apprx}
\end{figure}

As a result the rest area of $R_z$ is greater than $z-\frac{z^2\rho}{3}$. Considering all this part consisting sub-rectangles with $AR=3$, we can ignore them because they only have positive contribution in compensating for the small section of $R_z$ with aspect ratio more than 3. The upper bound of smaller area is $z\sqrt{\rho}+z\frac{\sqrt{\rho}}{3}+1.5\frac{z\sqrt{\rho}}{3}=\frac{5.5z\sqrt{\rho}}{3}$.
First assume that width of $R_x$ is greater than its height. In this situation It is enough to show that:
$$\frac{x\sqrt{\rho}+\frac{1}{\sqrt{\rho}}+\frac{5.5z\sqrt{\rho}}{3}}{x\sqrt{\rho}+\frac{1}{\sqrt{\rho}}+\frac{2z\sqrt{\rho}}{\sqrt{3}}}\leq 1.2$$
s.t. $$z\leq \frac{1}{3\rho},$$
$$z\leq 1-\frac{1}{\rho}$$

First constraint refers to the lower bound of 3 for aspect ratio of $R_z$ and second constraint ensures that $R_x$ is a forced rectangle. 
So, we need to show that
\[
\resizebox{1\columnwidth}{!}{%
$(1-z)\sqrt{\rho}+\frac{1}{\sqrt{\rho}}+\frac{5.5z\sqrt{\rho}}{3}-1.2((1-z)\sqrt{\rho}+\frac{1}{\sqrt{\rho}}+\frac{2z\sqrt{\rho}}{\sqrt{3}})\leq 0$
}
\]

The maximum of this expression is -0.2171 in $\rho=4/3$ and $z= 0.25$.

~\\
Now assume that width of $R_x$ is less than its height. In this situation it is enough to show that:
$$\frac{x\sqrt{\rho}+\frac{1}{\sqrt{\rho}}+\frac{5.5z\sqrt{\rho}}{3}}{2\sqrt{x}+\frac{2z\sqrt{\rho}}{\sqrt{3}}}\leq 1.2$$
s.t. $$z\leq \frac{1}{3\rho},$$
$$z> 1-\frac{1}{\rho}$$
We want to show that
$$(1-z)\sqrt{\rho}+\frac{1}{\sqrt{\rho}}+\frac{5.5z\sqrt{\rho}}{3}-1.2(2\sqrt{1-z}+\frac{2z\sqrt{\rho}}{\sqrt{3}})\leq 0$$
The maximum of this expression is -0.2171 in $\rho=4/3$ and $z= 0.25$.

~\\
~\\
\textbf{Case 8} This case is similar to Case 1 (Figure \ref{fig:1_apprx}) with the only difference that rectangle $R$ is the main rectangle that we want to partition($R_0$). For this case, either $R_y$ is a forced rectangle which causes $R_x$ to be a forced rectangle too or $R_y$ is not a forced rectangle. Similar to the analysis we did for case 7, when $R_y$ and $R_x$ are forced rectangle the approximation factor of whole $R$ is very low because only a very small part of $R_z$ may not be either a forced rectangle or a rectangle with AR less than 3.
Thus we analyze for the situation that $R_y$ is not a forced rectangle. \\
Remember that a big proportion of $R_z$ is consist of sub-rectangles that either have width greater than height or has aspect ratio less than 3. The sub-rectangles that have larger width than height, we know that every other layout cause them to have bigger aspect ratios. (Figure \ref{fig:9_apprx})
Hence again we disregard them and also the ones that have aspect ratio less than 3 in our calculation. The small part of $R_z$ that can be improved in other layouts has the maximum area of $\frac{z^2}{t\sqrt{3\rho}}$. Because it has $AR > 3$ and height of $\frac{z}{\sqrt{\rho}t}$
and show that:
\begin{figure}[tbp]%
    \centering%
    \begin{subfigure}[b]{.4\textwidth}%
        \includegraphics[width=\textwidth]{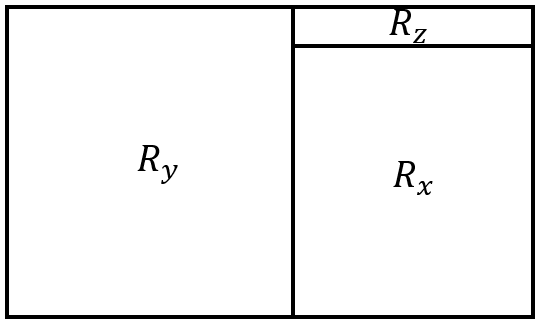}%
    \end{subfigure}%
    %
    \hspace{0.05\textwidth}
    \begin{subfigure}[b]{.4\textwidth}%
        \includegraphics[width=\textwidth]{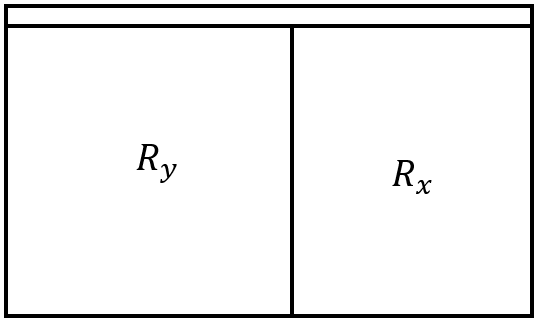}%
    \end{subfigure}%
    ~\\%
    ~\\%
    \begin{subfigure}[b]{.4\textwidth}%
        \includegraphics[width=\textwidth]{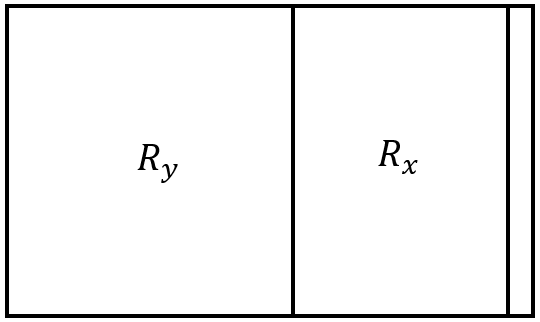}%
    \end{subfigure}%
    %
   \hspace{0.05\textwidth}
    \begin{subfigure}[b]{.4\textwidth}%
        \includegraphics[width=\textwidth]{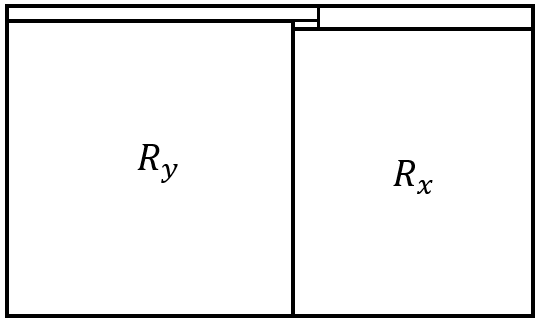}%
    \end{subfigure}%
    \subfigsCaption{\protect
        Four different sub-cases of Case 8 in the proof of approximation factor.%
    }%
    \label{fig:9_apprx}%
\end{figure}
$$\frac{\sqrt{\rho}+\frac{2}{\rho}+\frac{z}{2t\sqrt{\rho}}}{2(\sqrt{y}+\sqrt{x}+\frac{z}{e\sqrt{3\rho}})} \leq 1.2$$
s.t.
$$t\geq 1-\frac{1}{\rho}$$
$$z < \frac{\rho t^2}{3}$$
$$t\geq \frac{1}{3\rho}$$
$$t\leq 0.5+0.5z$$
$$t\leq \frac{1}{\rho}$$
So, we need to prove
$$\sqrt{\rho}+\frac{2}{\sqrt{\rho}}+\frac{z}{2t\sqrt{\rho}}-2.4(\sqrt{1-t}+\sqrt{t-z}+\frac{z}{t\sqrt{3\rho}}) \leq 0$$
Since second derivative of the expression over $z$ is always positive, $z=0$ or $z=\frac{\rho t^2}{3}$.

~\\
If $z=0$ the expression will be
$$\sqrt{\rho}+\frac{2}{\sqrt{\rho}}-2.4(\sqrt{1-t}+\sqrt{t})$$
From this we have either $t=\max\{\frac{1}{3\rho},1-\frac{1}{\rho}\}$ or $t=\min\{\frac{1}{\rho},0.5\}$.

~\\
If $t = \frac{1}{\rho} (\frac{1}{\rho}\leq 0.5)$: Since $1-\frac{1}{\rho} \leq \frac{1}{\rho}$, $\rho$ is bounded above by 2. So, $\rho=2$ is the only possible value for $\rho$ which makes the expression equal to -0.565.

~\\
If $t = \frac{1}{3\rho} (\frac{1}{3\rho}\geq 1-\frac{1}{\rho})$ The expression will be
$$\sqrt{\rho}+\frac{2}{\sqrt{\rho}}-2.4\sqrt{1-\frac{1}{3\rho}}-2.4\frac{1}{\sqrt{3\rho}}$$
The maximum of this expression for $1\leq \rho \leq 4/3$ would be on $\rho=1$ which is -0.34.

~\\
If $t = 1-\frac{1}{\rho} (\frac{1}{3\rho}\leq 1-\frac{1}{\rho})$: The expression will be
$$\sqrt{\rho}+\frac{2}{\sqrt{\rho}}-2.4\sqrt{1-\frac{1}{\rho}}-\frac{2.4}{\sqrt{\rho}}$$
The maximum of this expression for $4/3\leq \rho \leq 2$ would be on $\rho=4/3$ which is -0.54.

~\\
If $z=\frac{\rho t^2}{3}$ the expression becomes
$$\sqrt{\rho}+\frac{2}{\sqrt{\rho}}+\frac{t\sqrt{\rho}}{6}-2.4(\sqrt{1-t}+\sqrt{t-\frac{\rho t^2}{3}}+\frac{t\sqrt{\rho}}{3\sqrt{3}})$$
The maximum of this is $-0.357$ on $t=0.6$ and $\rho =1$.

~\\
~\\
\textbf{Case 9}
This case is similar to Case 2 (Figure \ref{fig:2_apprx}) with the only difference that rectangle R is the main rectangle that we want to partition($R_0$). The worst case of this case is similar to Case 8 and proves that the claim holds here too.

~\\
The following theorem summarizes our results.
\begin{thm}
Given a rectangle $R$ with $\Area(R)=A$ and a list of areas $A_1,...,A_n$ with $\sum_{i=1}^n A_i = A$, Algorithm \ref{alg:approxAlg} constructs a partition of $R$ into $n$ sub-rectangles with areas $A_1,...,A_n$ that is a factor 1.203-approximation for the minimum total partition perimeter.
\end{thm}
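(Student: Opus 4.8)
The plan is to assemble the theorem from the lower bound of \cref{lem:ApxLB} and the exhaustive case analysis of \cref{sec:ApxCases}. Let $P=\sum_{i=1}^n\Perim(R_i)$ be the total perimeter returned by \cref{alg:approxAlg} and let $\mathit{LB}=\sum_{i=1}^n\LB_i$ be the sum of the per-rectangle lower bounds from \cref{lem:ApxLB}; since $\mathit{LB}\le\mathrm{OPT}$, it suffices to prove $P/\mathit{LB}\le 1.203$. Because every summand in $P$ and in $\mathit{LB}$ is a positive real, the mediant inequality gives $P/\mathit{LB}\le\max_i\{\Perim(R_i)/\LB_i\}$, and, more usefully, after grouping the sub-rectangles into disjoint blocks $G_1,\dots,G_k$ the ratio $P/\mathit{LB}$ is bounded by the largest of the per-block ratios $(\sum_{i\in G_j}\Perim(R_i))/(\sum_{i\in G_j}\LB_i)$. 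The first split is the Case~I / Case~II dichotomy of \cref{sec:ApxUB}: if every simple output rectangle is forced or has aspect ratio at most $3$, then each block ratio is at most $2/\sqrt3<1.203$ and we are done; so from here on we assume the algorithm produces a compound rectangle of aspect ratio greater than $3$.

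The next step is to localize around the first such rectangle. Take the recursive call, closest to the root, whose right child $R_2$ has $\AR(R_2)=\rho_z\ge 3$ and drives the ratio above $2/\sqrt3$. By \cref{lem:ApxARGe3} the worst case forces at most one simple ``wide'' rectangle per level inside the subtree of $R_2$ together with a single ``tall'' descendant that can again be over-wide, so the entire perimeter contributed by that subtree telescopes into the geometric bound $w_z+\frac{z/9}{w_z}+\cdots\le w_z+\frac{1.5z}{w_z}$ of the corollary to \cref{lem:ApxARGe3}, while the matching denominator mass is at least $2\sqrt z$. After this reduction, the approximation factor becomes a function of only a handful of real parameters: the area $z$ of the bad rectangle, the aspect ratio $\rho$ of the relevant ancestor $R$, the area split $t$ (and $y=1-t$) inside $R$, and the area $A_{R'}$ of the sibling $R'$ of $R$, all constrained by the sorted order of $L$ and the feasibility relations of \cref{lem:ApxChildrenAspectRatios}.

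The heart of the proof is then the enumeration of Cases~1--9: the ancestor $R$ either is or is not the input rectangle $R_0$; the cuts inside $R$ and inside its compound child are each horizontal or vertical; and the sibling $R'$ of $R$ is a simple (possibly forced) rectangle or a compound rectangle with two, three, or four leaves, the four-leaf cap following from tracking how the merged areas $x+z$ and $y$ migrate through the sorted list. In each configuration I would write the approximation factor as an explicit rational expression in $z,t,\rho,A_{R'}$, observe that its second derivative in $z$, then in $t$, then in $\rho$ has constant sign on the feasible box (so the maximum sits at a vertex of the feasible region), reduce to a one-variable expression on each vertex, and check that it stays below $1.203$; the binding instance is a simple sibling of area $1/2$ abutting a rectangle of aspect ratio $3/2$, which attains $1.2029$ in \emph{Case~1}. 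Combining via the block/mediant argument: every ``bad'' subtree is charged against its own denominator mass together with its sibling block and is bounded by at most $1.203$, every remaining block has ratio at most $2/\sqrt3$, and the mediant of all these fractions is at most $\max\{2/\sqrt3,\,1.203\}=1.203$.

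The main obstacle is not any single inequality but exhaustiveness and bookkeeping: one must be certain that Cases~1--9 genuinely cover every way an aspect-ratio-greater-than-$3$ rectangle can sit in the recursion tree (root versus non-root, both cut orientations at two consecutive levels, simple versus $2$/$3$/$4$-leaf sibling), that the constraints on $(z,t,\rho,A_{R'})$ are tight enough for the vertex enumeration to be rigorous, and that the ``other sub-rectangles'' hidden in the symbols $a$ and $b$ of \cref{lem:ApxARGe3} can always be absorbed without worsening the worst case (this uses $a\ge b$). The most delicate sub-step is the three-leaf sibling sub-case, where \cref{lem:ApxChildrenAspectRatios} must be invoked to show that the two nearly-equal pieces of area just above $x$ merge into a rectangle of aspect ratio at most $3$, so that its contribution is again controlled by a term of the form $2\sqrt{\cdot}$ rather than by a large perimeter.
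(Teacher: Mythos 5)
Your proposal is correct and follows essentially the same route as the paper: the per-rectangle lower bounds and mediant/grouping argument, the dichotomy between the all-aspect-ratio-at-most-$3$ situation (bounded by $2/\sqrt{3}$) and the presence of a wide compound rectangle, the reduction of the bad subtree to the telescoped bound $w_z + 1.5z/w_z$ via the worst-case structure lemma, and the vertex enumeration over Cases 1--9 with the binding configuration (a simple sibling of area $1/2$ next to a rectangle of aspect ratio $3/2$, yielding $1.2029$). Nothing in your plan departs from the paper's own argument, including your identification of exhaustiveness of the case enumeration and the three-leaf-sibling sub-case as the delicate points.
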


\section{Conclusion}
\label{sec:conclusion}
We developed a $1.203$-approximation algorithm for the problem of partitioning a rectangle according to list of areas for the sub-rectangles with the objective of minimizing total perimeter of sub-rectangles. This improves the approximation factor of \cite{nagamochi2007approximation}. 
An interesting direction for future research is to modify this algorithm for partitioning a polygon into polygonal sub-regions of given areas. Such results could then be useful in designing ad-hoc networks of geographic resources similar to that of \cite{carlsson2016geometric}. Moreover, equitable partitioning of geographic regions in a way that the distribution of resources among the sub-regions is as close as to one another has various applications in logistics \cite{behrooziCarlsson2020Springer}. The bundling step of the algorithm could be revised to bundle a number of geographic resources, instead of areas, to allocate resources to sub-regions according to a given quota.

\section*{Acknowledgments}
The authors gratefully acknowledge support from a Tier-1 grant from Northeastern University.

\bibliographystyle{IEEEtran}
\bibliography{IEEEabrv,Approximate-partitioning.bib}

\end{document}